\theoremstyle{thmstyleone}%
\newtheorem{theorem}{Theorem}
\newtheorem*{theorem*}{Theorem}
\newtheorem{lemma}{Lemma}
\theoremstyle{thmstyletwo}%
\newtheorem{example}{Example}%
\newtheorem{remark}{Remark}%
\newtheorem{question}{Question}
\theoremstyle{thmstylethree}%
\begin{document}

\title[Article Title]{Bergman metrics induced by the ball}

\author{\fnm{Matteo} \sur{Palmieri}} \email{matteo.palmieri@unica.it}

\affil{\orgdiv{Dipartimento di Matematica e Informatica}, \orgname{Università degli Studi di Cagliari}, \orgaddress{\street{Via Ospedale 72}, \city{Cagliari}, \postcode{09124}, \state{Sardinia}, \country{Italy}}}

\abstract{We investigate when the Bergman metric of a bounded domain is, up to a constant factor $\lambda$, induced by the Bergman metric of a finite-dimensional unit ball $\mathbb{B}^N$ via a holomorphic isometric immersion. For a strictly pseudoconvex domain in $\mathbb{C}^2$ we prove rigidity: if such an immersion extends smoothly and transversally past the boundary and 
$(N + 1)/\lambda - 3 \in \mathbb{N}$, then the domain is biholomorphic to the ball. We then consider two broad classes of examples: Hartogs domains over bounded homogeneous bases and egg domains over irreducible symmetric bases, and show that, in finite target dimension, the only members whose (rescaled) Bergman metric is induced by that of a ball are the balls themselves. The proofs combine Calabi's diastasis criterion with explicit Bergman kernel formulas (such as Fefferman's expansion) and algebraic arguments that force arithmetic constraints on the scaling factor. In higher dimensions, the first result follows under a Ramadanov-type assumption.}

\keywords{Bounded domain, Bergman kernel, Bergman metric, K{\"a}hler immersion}

\maketitle

\section{Introduction}\label{sec1}

\subsection{The Bergman kernel and metric}
Since their introduction by S. Bergman in 1922 (see \cite{Bergman1922}), the Bergman kernel and metric have covered a crucial role in several topics of geometrical analysis and differential geometry. Recall that, for a bounded domain $\Omega \subset \mathbb{C}^n$, the associated Bergman space is the Hilbert space of holomorphic square-integrable functions
\begin{align*}
    L^2_h(\Omega) = \mathcal{O}(\Omega) \cap L^2(\Omega)
\end{align*}
If $\{ \phi_\alpha \}_{\alpha \in A}$ is a complete orthonormal system for $L^2_h(\Omega)$, the \emph{Bergman kernel} of $\Omega$ is given by the expansion
\begin{equation}\label{Eqkernelexpansion}
    K_\Omega(z, \xi) = \sum_{\alpha \in A}\ \phi_\alpha(z) \overline{\phi_\alpha(\xi)}
\end{equation}
This formula is far from optimal to find closed forms of $K_\Omega$ for $\Omega$ arbitrary. Nonetheless, it is well-known (e.g. by choosing a specific complete orthonormal system) that the function $\Phi(z) := \log K_\Omega(z, z),\ z \in \Omega$, is strictly plurisubharmonic, so 
\begin{align*}
    \omega_\Omega = \sqrt{-1} \partial \bar\partial \Phi
\end{align*}
defines a K{\"a}hler form on $\Omega$. The associated K{\"a}hler metric $g_\Omega$ is called the \emph{Bergman metric} of $\Omega$. It is remarkable that if $h \colon \Omega_1 \to \Omega_2$ is a biholomorphism of bounded domains, the Bergman kernel for $\Omega_1$ transforms as
\begin{equation}\label{EqtransfBergman}
    K_{\Omega_1}(z, z) = | \det(J(h)(z)) |^2 K_{\Omega_2}(h(z), h(z))
\end{equation}
where $J(h)$ is the Jacobian matrix of $h$, which implies that biholomorphisms between bounded domains are isometries with respect to the Bergman metrics.

The most basic and notorious model is given by the unit ball $\mathbb{B}^n$, for which
\begin{equation}\label{Eqkernelball}
    K_{\mathbb{B}^n}(z, z) = \frac{n!}{\pi^n} \frac{1}{(1 - || z ||^2)^{n + 1}}\ ,\quad \omega_{\mathbb{B}^n} = (n + 1) \sqrt{-1} \partial \bar\partial \log (1 - || z ||^2)
\end{equation}
where $|| \cdot ||$ is the Euclidean norm on $\mathbb{C}^n$. Notice that, up to the scalar factor $(n + 1)$, $g_{\mathbb{B}^n}$ coincides with the hyperbolic metric on $\mathbb{B}^n$ of constant holomorphic sectional curvature $-4$. The interested reader can refer to \cite{Krantz2013} for further literature on the Bergman kernel and metric. 

Throughout the last century, mathematicians dedicated their efforts to unraveling the relations between the geometric behavior of $\Omega$ and the properties of $g_\Omega$. Great attention has been given to the general problem of understanding what conditions on $g_\Omega$ imply that $\Omega$ must be biholomorphic to $\mathbb{B}^n$.

A crucial result in this direction is \emph{Lu's Uniformization Theorem} \cite{UnifThm1}, which establishes that if $g_\Omega$ is complete with constant holomorphic sectional curvature (c.h.s.c. in the sequel) then $\Omega$ is biholomorphic to $\mathbb{B}^n$. Very recently, Huang, Li and Treuer \cite{UnifThm2} relaxed the hypothesis of completeness and assuming instead pseudoconvexity (see \cite{Bremermann1955} for the relationship between completeness and pseudoconvexity) showed that $\Omega$ is biholomorphic to $\mathbb{B}^n$ with possibly a pluripolar set removed. Shortly later, Ebenfelt, Treuer and Xiao \cite{UnifThm3} maintained only the curvature hypothesis and demonstrated that $\Omega$ is biholomorphic to $\mathbb{B}^n$ with possibly a measure zero set removed, over which $L^2_h$ functions extend holomorphically to $\mathbb{B}^n$.

Other curvature conditions gained relevance in this context, such as the Einstein condition for $g_\Omega$. In his renowned problem section \cite{YauProblem} Yau asked (in a slightly different way) if the completeness and Einstein hypotheses on $g_\Omega$ imply that $\Omega$ is homogeneous, that is, its biholomorphisms group acts transitively on $\Omega$. This is a natural question, since it follows by \cite{Bergman1951}, \cite{ChengYau} that the converse is true. Furthermore, assuming this conjecture holds, it follows by the work of Rosay \cite{Rosay} that the only smoothly bounded pseudoconvex domain with Bergman-Einstein metric is the ball. This has been cleared in the strict pseudoconvexity case, also known as the Cheng conjecture (see \cite{FuWong} for the precise statement), in dimension two by Fu and Wong \cite{FuWong} and Nemirovskii and Shafikov \cite{NemirovskiShafikovConj} and then in the general case by Huang and Xiao \cite{HuangXiao}.

In this paper, we will focus on yet another condition: the existence of a holomorphic isometric immersion (\emph{K{\"a}hler immersion} in the sequel) of $(\Omega, g_\Omega)$ into the ball. More precisely, we will address the following:
\begin{question}\label{q1}
    Let $\Omega \subset \mathbb{C}^n$ be a bounded domain. When, eventually after rescaling, is $g_\Omega$ induced by $g_{\mathbb{B}^N}$? Namely, under what conditions is there a K{\"a}hler immersion $(\Omega, \lambda g_\Omega) \to (\mathbb{B}^N, g_{\mathbb{C}^N})$?
\end{question}
This condition can be seen as a generalization of the c.h.s.c. one (a K{\"a}hler immersion $(\Omega, \lambda g_\Omega) \to (\mathbb{B}^n, g_{\mathbb{B}^n})$ is locally a biholomorphic isometry by dimensional reasons, so it preserves the holomorphic sectional curvature), and is in general much weaker. Furthermore, it has already led to promising rigidity results. Di Scala and Loi used it \cite{LoiScala1} to prove that among all the Hermitian symmetric spaces of noncompact type, the hyperbolic spaces are the only one which admits a K{\"a}hler immersion, after rescaling, into $\mathbb{B}^N$. This applies in particular to the case of $\Omega$ symmetric. Later on, Di Scala, Ishi and Loi \cite{LoiScala2} broadened the result to homogeneous K{\"a}hler manifolds (valid in particular to the case of $\Omega$ homogeneous). In \cite{HaoWang}, Hao and Wang found some constrictions on the existence of K{\"a}hler immersions into Fubini--Study spaces for bounded pseudoconvex Hartogs domains.

\subsection{Statement of the results}
The purpose of this work is to present three rigidity results for the ball involving the above mentioned condition. In Section \ref{sec2}, the first part of this article, we recall the work of Calabi \cite{Calabi53} on K{\"a}hler immersions into Fubini--Study spaces. It will be of great use to prove the main results of this article, which will be addressed in the second part of this article, namely Section \ref{sec3} and Section \ref{sec4}.

The first result we present is the following (motivational) theorem. From now on, \lq\lq\text{cl}\rq\rq\ denotes the topological closure.
\begin{theorem}\label{thm1}
Let $\Omega \subset \mathbb{C}^2$ be a simply connected strictly pseudoconvex bounded domain with $C^1$-regular boundary. If there is $\lambda > 0$ such that $\frac{N + 1}{\lambda} - 3 \in \mathbb{N}$ and a K{\"a}hler immersion $f \colon (\Omega, \lambda g_\Omega) \to (\mathbb{B}^N, g_{\mathbb{B}^N}),\ N < \infty$, that extends smoothly to $F \colon U \to V$, where $U, V$ are neighborhoods of $\text{cl}(\Omega), \text{cl}(\mathbb{B}^N)$, such that $F(\partial \Omega) \subseteq \partial \mathbb{B}^N$ transversally and $F(U \smallsetminus \text{cl}(\Omega)) \subseteq V \smallsetminus \text{cl}(\mathbb{B}^N)$, then $\Omega \cong \mathbb{B}^2$. 
\end{theorem}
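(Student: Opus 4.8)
The plan is to convert the existence of the Kähler immersion into a rigid algebraic identity for the Bergman kernel via Calabi's criterion, and then extract boundary asymptotics through Fefferman's expansion. First I would use Calabi's diastasis criterion together with the explicit potentials: the Bergman potentials of $\lambda g_\Omega$ and $g_{\mathbb{B}^N}$ are $\lambda\log K_\Omega$ and $-(N+1)\log(1-\|z\|^2)$ respectively, so the immersion condition $f^*\omega_{\mathbb{B}^N}=\lambda\omega_\Omega$ forces $\lambda\log K_\Omega + (N+1)\log(1-\|f\|^2)$ to be pluriharmonic on $\Omega$. Since $\Omega$ is simply connected this equals $2\operatorname{Re}\psi$ for some holomorphic $\psi$, and setting $\phi_0 := e^{-\psi/(N+1)}$ and $\phi_j := f_j\phi_0$ yields the Calabi decomposition
\begin{equation*}
K_\Omega(z,z)^{-\lambda/(N+1)} = |\phi_0(z)|^2 - \sum_{j=1}^{N} |\phi_j(z)|^2,
\end{equation*}
with $\phi_0,\dots,\phi_N$ holomorphic on $\Omega$; equivalently, the diastasis of $\lambda g_\Omega$ is the pullback of that of $\mathbb{B}^N$, whose closed form is explicit.

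Next I would exploit the arithmetic hypothesis. Writing $s := (N+1)/\lambda$, which is by assumption an integer with $s-3\in\mathbb{N}$, raising the identity to the $s$-th power gives $K_\Omega^{-1} = \rho_*^{\,s}$, where $\rho_* := |\phi_0|^2 - \sum_{j}|\phi_j|^2$. Here the boundary hypotheses enter decisively: because $F$ extends smoothly past $\operatorname{cl}(\Omega)$ with $F(\partial\Omega)\subseteq\partial\mathbb{B}^N$ transversally and $F(U\smallsetminus\operatorname{cl}(\Omega))\subseteq V\smallsetminus\operatorname{cl}(\mathbb{B}^N)$, the $\phi_j$ extend smoothly to a neighborhood of $\operatorname{cl}(\Omega)$, and $\rho_*>0$ on $\Omega$, $\rho_*=0$ on $\partial\Omega$, with $d\rho_*\neq 0$ there. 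Thus $\rho_*$ is a smooth defining function for $\Omega$, and $K_\Omega = \rho_*^{-s}$ holds as a smooth expansion up to the boundary.

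Then I would compare with Fefferman's expansion, which for the strictly pseudoconvex $\Omega\subset\mathbb{C}^2$ reads $K_\Omega = \varphi\,\rho^{-3} + \eta\log\rho$ with $\varphi,\eta\in C^\infty(\operatorname{cl}(\Omega))$ and $\varphi>0$ on $\partial\Omega$. Writing $\rho_* = a\rho(1+O(\rho))$ near $\partial\Omega$ with $a>0$ smooth and multiplying $K_\Omega=\rho_*^{-s}$ by $\rho^{3}$, the limit as $\rho\to 0$ is finite and positive, which forces $s=3$: any $s>3$ makes $\rho^{3-s}$ blow up, contradicting $\varphi$ being finite and nonzero on $\partial\Omega$. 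Once $s=3$, the representation $K_\Omega=\rho_*^{-3}$ has no logarithmic singularity, so by uniqueness of Fefferman's expansion the log-term coefficient $\eta$ vanishes identically. Finally, invoking the two-dimensional resolution of the Ramadanov problem — vanishing of $\eta$ implies $\partial\Omega$ is locally CR-spherical — together with simple connectivity and strict pseudoconvexity yields that $\Omega$ is biholomorphic to $\mathbb{B}^2$.

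The main obstacle I expect is the middle step: proving that $\rho_*$ is genuinely a smooth defining function along all of $\partial\Omega$. One must show that the transversality of $F$ together with the sign condition $F(U\smallsetminus\operatorname{cl}(\Omega))\subseteq V\smallsetminus\operatorname{cl}(\mathbb{B}^N)$ translates into $\rho_*$ vanishing to exactly first order with nonvanishing differential — ruling out higher-order contact of $\|f\|^2$ with $1$ — so that $K_\Omega=\rho_*^{-s}$ is a bona fide smooth boundary expansion comparable to Fefferman's. A secondary technical point is justifying the smooth extension and polarization of the holomorphic $\phi_j$ across $\partial\Omega$, and confirming that the exponent-matching cleanly excludes every $s>3$ permitted by the hypothesis $\frac{N+1}{\lambda}-3\in\mathbb{N}$.
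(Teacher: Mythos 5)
Your overall route --- Calabi's diastasis identity, comparison with Fefferman's expansion, the two\-/dimensional Ramadanov conjecture, and uniformization of spherical boundaries --- is the same as the paper's, but the step you yourself flag as ``the main obstacle'' is a genuine gap, and it cannot be closed the way you propose. Your candidate defining function is $\rho_* = |\phi_0|^2\left(1-\| f \|^2\right) = K_\Omega^{-\lambda/(N+1)}$. The boundary hypotheses give smooth extension of the $f_j$ (they are components of $F$), but they say nothing about $\phi_0 = e^{-\psi/(N+1)}$: the function $\psi$ was produced abstractly from pluriharmonicity and simple connectedness, not from $F$. In fact, combining your decomposition with the diastasis identity at $0$ gives
\begin{equation*}
|\phi_0(z)|^2 = \left(\frac{K_\Omega(0,0)}{|K_\Omega(z,0)|^{2}}\right)^{\lambda/(N+1)},
\end{equation*}
so smoothness and positivity of $\rho_*$ up to $\partial\Omega$ would require $K_\Omega(\cdot,0)$ to extend smoothly to $\text{cl}(\Omega)$ \emph{and to be zero-free there} --- a Lu Qi-Keng--type condition that the hypotheses do not provide. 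Worse, the claim is circular: by Fefferman's expansion \eqref{EqFefferman}, $\rho_* = K_\Omega^{-1/s}$ (with $s := \frac{N+1}{\lambda}$) behaves like $\varphi^{-1/s}\rho^{3/s}$ near $\partial\Omega$, so $\rho_*$ vanishes to first order with nonvanishing differential precisely when $s = 3$; for the values $s = 4, 5, \dots$ permitted by the hypothesis $s - 3 \in \mathbb{N}$, your ``smooth defining function'' claim is simply false (the vanishing order is fractional), and the exponent-matching step never gets started. One could rule out $s > 3$ honestly --- transversality plus Fefferman give $|K_\Omega(z,0)|^2 \sim c\,\rho^{\,s-3}$, so $s>3$ would force $K_\Omega(\cdot,0)$ to vanish on all of $\partial\Omega$, contradicting the maximum principle --- but this needs $\partial\Omega$ smooth and Kerzman-type boundary regularity of $K_\Omega(\cdot,0)$, neither of which your argument establishes. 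A smaller inaccuracy: Fefferman's decomposition is not unique, so even granting $s=3$ you can only conclude that the log coefficient vanishes to infinite order at $\partial\Omega$, not identically; that weaker statement is what Boutet de Monvel's theorem needs anyway.

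The paper breaks the circle by the opposite choice of defining function: $\psi_\Omega := \psi_{\mathbb{B}^N}\circ F$, i.e.\ essentially $1-\| F \|^2$ up to normalization. This is smooth up to the boundary because $F$ is, vanishes exactly on $\partial\Omega$ by the sign condition on $F$, and has nonvanishing gradient there by transversality via \eqref{Eqtransv}; in particular $\Omega$ is a fortiori smoothly bounded, which is what legitimizes Fefferman's expansion in the first place. The awkward factor $|K_\Omega(z,0)|^2/K_\Omega(0,0)$ is kept on the other side of the identity \eqref{EqdiastasisFefferman}, and the arithmetic hypothesis is used \emph{not} to force $s = 3$, but only to make $\psi_\Omega^{\,s-3}$ a genuine smooth function, so that Lemma $2.2$ of \cite{FuWong} applies to
\begin{equation*}
\frac{|K_\Omega(z,0)|^2}{K_\Omega(0,0)} - \phi_\Omega\,\psi_\Omega^{\,s-3} = \psi_\Omega^{\,s}\,\widetilde{\phi_\Omega}\,\log\psi_\Omega
\end{equation*}
and yields infinite-order vanishing of $\widetilde{\phi_\Omega}$ for every admissible value of $s$ at once. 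From there the endgame is the one you intend: \cite{Monvel} gives sphericity of $\partial\Omega$, and Theorem A.2 of \cite{NemirovskiShafikovUnif} together with simple connectedness gives $\Omega \cong \mathbb{B}^2$. To salvage your write-up, replace $\rho_*$ by $\psi_{\mathbb{B}^N}\circ F$ and route the middle of the argument through Fu--Wong's lemma rather than through smoothness of $K_\Omega^{-1/s}$.
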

The algebraic assumption on the rescaling parameter might seem unnatural. It is quite the contrary, as we will highlight in Section \ref{sec3} how the K{\"a}hler immersion condition imposes strict algebraic constraints. The proof involves Fefferman's expansion for the Bergman kernel \cite{Fefferman74}, and the key step is to show that the log-term of the expansion vanishes to infinite order when approaching $\partial \Omega$. This is achieved by combining the additional hypotheses on the K{\"a}hler immersion and the rescaling parameter. The conclusion follows by applying Ramadanov's conjecture \cite{Ramadanov}, established in dimension two by Boutet de Monvel \cite{Monvel}, which yields that $\partial \Omega$ is spherical, and the uniformization results of Nemirovskii and Shafikov \cite{NemirovskiShafikovUnif}. 

In the second part of this paper, namely Section \ref{sec3}, we demonstrate two rigidity results for families of domains that are somewhat easy to write down, but rich enough (see \cite{AzukawaSuzuki}, \cite{DAngelo1994}, \cite{HaoWang}, \cite{PanWangZhang}, \cite{IshiPark}, \cite{RoosYin}) to display nontrivial behavior of kernels, metrics, automorphisms, and boundary regularity. Hence, they function as testbeds for conjectures and counterexamples. For instance, in the counterexample of the conjecture that the universal covering of a compact K{\"a}hler manifold of negative sectional curvature should be biholomorphic to the unit ball, the explicit computation of their Bergman kernel greatly contributed (see \cite{Siu}). Great interest into them has also been shown towards the Lu Qi-Keng's problem (see \cite{Boas}, \cite{IshiPark}). Specifically, we will prove:
\begin{theorem}\label{thm2}
Up to biholomorphisms, the only Hartogs-type domain based on a bounded homogeneous domain whose Bergman metric is induced, eventually after rescaling, by the Bergman metric of $\mathbb{B}^N,\ N < \infty$, is the ball.
\end{theorem}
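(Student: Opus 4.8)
The plan is to prove Theorem \ref{thm2} by exploiting the structure theory of bounded homogeneous domains together with Calabi's diastasis criterion. Let me outline the strategy.
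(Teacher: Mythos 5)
Your proposal stops at the point where a proof would need to begin: it announces that you will use ``the structure theory of bounded homogeneous domains together with Calabi's diastasis criterion,'' but contains no argument, no reduction, and no computation. There is nothing to verify, so the gap is the entire proof. Moreover, the two ingredients you name are, by themselves, not enough to carry the statement: Calabi's criterion (Theorem \ref{thmCalabicriterion}) only gives a rank/positivity condition on the diastasis expansion, and structure theory of homogeneous domains says nothing yet about the Hartogs fiber direction or about the two free parameters $s$ (the exponent defining $\Omega_{m,s}$) and $\lambda$ (the rescaling). The heart of the theorem is precisely to turn the immersion hypothesis into \emph{arithmetic} constraints on $\lambda$ and $s$, and your sketch does not indicate how that would happen.

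Concretely, the steps you would still need are the following, and each requires a distinct idea. First, restrict the immersion to the base via $\iota(z)=(z,0)$: the Ishi--Park kernel formula \eqref{EqBergmanHartogs} gives $\iota^*\omega_{\Omega_{m,s}}=(ms+1)\,\omega_\Omega$, so $f\circ\iota$ is a K\"ahler immersion of $(\Omega,\lambda(ms+1)g_\Omega)$ into $\mathbb{B}^N$, and Theorem $2$ of \cite{LoiScala2} (this is where homogeneity of the base is actually used) forces $\Omega\cong\mathbb{B}^n$; the same reference also rules out $s=0$, since $\Omega_{m,0}=\Omega\times\mathbb{B}^m$ is a product. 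Second, one must transport $\Omega_{m,s}$ to $(\mathbb{B}^n)_{m,s}$, which needs a holomorphic $s$-th power of $\det(J(\varphi))$ for a biholomorphism $\varphi\colon\Omega\to\mathbb{B}^n$ (existence of a holomorphic logarithm on the simply connected $\Omega$, cf.\ Lemma \ref{Lemmalogarithm} and the transformation rule \eqref{EqtransfBergman}). Third, restricting the diastasis identity to a disc in the fiber $\{0\}\times\mathbb{C}^m$ and recognizing both sides as Nash-algebraic expressions, Lemma \ref{LemmaNash} forces $\lambda\in\mathbb{Q}$; without this, finiteness of $N$ never enters and no rigidity can follow. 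Finally, Calabi's criterion applied along the fiber shows a certain explicit function of $1-C\|\xi\|^2$ is a polynomial, and a divisibility argument kills all coefficients $c(s,j)$ with $j<n$; comparing the zero locus of the resulting polynomial $b(k)=c(s,n)(k+1)_n$ with that of $(k+1)_n$ pins down $s=\tfrac{1}{n+1}$, whence $\Omega_{m,s}\cong(\mathbb{B}^n)_{m,\frac{1}{n+1}}\cong\mathbb{B}^{n+m}$. Until your outline engages with at least the fiber restriction, the rationality of $\lambda$, and the determination of $s$, it is not a proof strategy but a restatement of the tools available.
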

\begin{theorem}\label{thm3}
Up to biholomorphisms, the only egg domain based on an irreducible symmetric domain whose Bergman metric is induced, eventually after rescaling, by the Bergman metric of $\mathbb{B}^N,\ N < \infty$, is the ball.
\end{theorem}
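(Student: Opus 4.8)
The plan is to translate the immersion problem into Calabi's diastasis criterion recalled in Section~\ref{sec2} and then to localize it along the two natural slices of the domain, reducing the symmetric base to the Di Scala--Loi rigidity \cite{LoiScala1} and the fibre to an explicit finite-rank computation. Writing the egg domain over an irreducible bounded symmetric domain $\Omega$ in the normalization
$$E=\{(z,w)\in\Omega\times\mathbb{C}^{d_0}\ :\ \|w\|^2<N_\Omega(z,\bar z)^{\mu}\},\qquad \mu>0,$$
where $N_\Omega$ is the generic norm and $p$ the genus, so that $K_\Omega(z,\bar z)=c\,N_\Omega(z,\bar z)^{-p}$, I would first record the known closed form (see \cite{RoosYin}, \cite{IshiPark}) of the Bergman kernel of $E$, which factors as
$$K_E(z,w;\bar z,\bar w)=N_\Omega(z,\bar z)^{-(p+\mu d_0)}\,\Psi(x),\qquad x=\frac{\|w\|^2}{N_\Omega(z,\bar z)^{\mu}},$$
with $\Psi$ an explicit hypergeometric-type function analytic on $[0,1)$ with a pole at $x=1$. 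Since $N_\Omega(z,0)\equiv1$, the Bergman diastasis of $E$ centred at the origin takes the manageable form
$$e^{-D_E((0,0),(z,w))}=N_\Omega(z,\bar z)^{\,p+\mu d_0}\,\frac{\Psi(0)}{\Psi(x)}.$$

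Next I would apply Calabi's criterion in the form adapted to the ball. Because $g_{\mathbb{B}^N}=(N+1)g_{\mathrm{hyp}}$, a K\"ahler immersion $(E,\lambda g_E)\to(\mathbb{B}^N,g_{\mathbb{B}^N})$ with $N<\infty$ and $f(0,0)=0$ exists if and only if, setting $c=\lambda/(N+1)$, the function
$$1-e^{-c\,D_E((0,0),(z,w))}$$
is a finite sum $\sum_{j=1}^N|f_j|^2$ of squared moduli of holomorphic functions vanishing at the origin, equivalently its Hermitian matrix of Taylor coefficients is positive semidefinite of rank exactly $N$. The coupling $c=\lambda/(N+1)$ between the scaling factor and the target dimension is exactly what produces the arithmetic constraints on $\lambda$ advertised in the abstract, since $N$ must coincide with the rank of the resulting form.

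The decisive reduction is to restrict this criterion to the base slice $\{w=0\}$, i.e. $x=0$, where it becomes
$$1-N_\Omega(z,\bar z)^{\,c(p+\mu d_0)}=1-e^{-c'D_\Omega(0,z)},\qquad c'=\tfrac{c}{p}\,(p+\mu d_0).$$
Restricting a positive semidefinite finite-rank Hermitian kernel to a complex submanifold keeps it positive semidefinite of no larger rank, so $(\Omega,\lambda'g_\Omega)$ admits, for a suitable $\lambda'$, a K\"ahler immersion into a finite-dimensional ball; by the Di Scala--Loi theorem \cite{LoiScala1} this forces the irreducible symmetric domain $\Omega$ to be a ball, say $\Omega=\mathbb{B}^n$ with $N_\Omega=1-\|z\|^2$ and $p=n+1$.

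It then remains to rule out $\mu\neq1$. With $\Omega=\mathbb{B}^n$ I would feed the explicit fibre factor $\Psi$ back in and analyse $1-e^{-cD_E}$ as a whole: on the fibre slice $\{z=0\}$ it reduces to $1-\bigl[\Psi(0)/\Psi(\|w\|^2)\bigr]^{c}$, and the full positive-semidefinite finiteness requirement forces the relevant exponents to be positive integers and, together with the base constraint $c(p+\mu d_0)\in\mathbb{N}$, pins $\mu=1$ and $\lambda=1$, whence $E=\{\|z\|^2+\|w\|^2<1\}\cong\mathbb{B}^{\,n+d_0}$. I expect this last step to be the main obstacle: one must control the non-integer powers $[\Psi(0)/\Psi(x)]^{c}$ of the hypergeometric fibre factor and show that their Taylor matrices have infinite rank, or fail positivity, for every admissible pair $(\lambda,\mu)$ except the ball case, which is where the bookkeeping of Pochhammer symbols is heaviest. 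Since the reduction along $\{w=0\}$ is insensitive to the precise shape of the fibre, the same scheme also covers the variant egg domains carrying powers on the fibre coordinates, and in every case the only survivors are the balls.
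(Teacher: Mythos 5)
Your first half does match the paper's opening move: the paper also restricts to the base slice (showing that $\iota(z)=(z,0,0)$ satisfies $\iota^*\omega_E=\bigl(\tfrac{p}{g}+\tfrac{q}{kg}+1\bigr)\omega_\Omega$, so that $f\circ\iota$ is a K\"ahler immersion of a rescaled $(\Omega,g_\Omega)$ into $\mathbb{B}^N$) and invokes Di Scala--Loi--type rigidity (\cite{LoiScala2}, applicable as in \cite{LoiScala1} since $\Omega$ is symmetric) to force $\Omega\cong\mathbb{B}^n$. But there is a structural problem before that: the domain you actually set up, $\|w\|^2<N_\Omega(z,\bar z)^\mu$, is an egg of type I, which by Remark \ref{rmk3} is nothing but the Hartogs-type domain $\Omega_{q,\frac{1}{kg}}$ and hence is already disposed of by Theorem \ref{thm2}. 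The substantive content of Theorem \ref{thm3} is the type II egg $\|\xi_1\|^2+\|\xi_2\|^{2k}<N_\Omega(z)$, whose kernel is the genuinely two-variable expression \eqref{EqBergmanEgg} involving $\Lambda^{(p-1),(q-1)}$; your closing sentence, that the scheme "also covers the variant egg domains carrying powers on the fibre coordinates", waves away exactly the case the theorem is about, and the claimed insensitivity to the shape of the fibre is true only for the base reduction, not for the fibre analysis that you yourself identify as the main obstacle.

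That fibre analysis is the genuine gap: it is not carried out, and as sketched it cannot be completed. Calabi's criterion (Theorem \ref{thmCalabicriterion}) with $N<\infty$ only tells you that $1-\bigl[\Psi(0)/\Psi(x)\bigr]^{c}$ is a polynomial; to extract arithmetic constraints from that you must first know that the exponent $c=\lambda/(N+1)$ is \emph{rational}, which is not automatic and is precisely what the paper's Nash-algebraic step supplies (Theorem \ref{thmNash}/Lemma \ref{LemmaNash}, applied after restricting the diastasis identity \eqref{Eqdiastasis1} to a complex segment in the fibre). Your assertion that positivity and finiteness "force the relevant exponents to be positive integers" is unsupported, and your expected conclusion $\lambda=1$ is not what comes out (nor is it needed): what is forced is the Hartogs parameter $s=\frac{1}{n+1}$, while $\lambda$ remains constrained only arithmetically. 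Note also that rank exactly $N$ should be rank at most $N$, and that replacing $N_\Omega$ by $1-\|z\|^2$ after $\Omega\cong\mathbb{B}^n$ requires an explicit biholomorphism of egg domains, built in the paper from a holomorphic root of $\det J(\varphi)$ via Lemma \ref{Lemmalogarithm}. The paper in fact never performs any fibre analysis for eggs at all: once the base is $\mathbb{B}^n$, it observes that the type II egg over the ball is itself a Hartogs-type domain over a larger ball, $E(p,q,\mathbb{B}^n,k)\cong(\mathbb{B}^{n+p})_{q,\frac{1}{k}}$, and simply invokes Theorem \ref{thm2}, where the heavy work (rationality of $\lambda$ via Lemma \ref{LemmaNash}, then a polynomial-divisibility argument forcing $c(s,j)=0$ for $j<n$) has already been done. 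To salvage your plan you would either have to reproduce that entire machinery for the hypergeometric factor of the type II kernel, or, more economically, adopt the paper's reduction.
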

The argument for both these results is based on the explicit form of the Bergman kernel for Hartogs-type and egg domains, obtained when the basis has a high degree of symmetry. Calabi's criterion is used to obtain algebraic relations involving the constants that define the domain, which result in strong constraints through the use of algebraic methods inspired by holomorphic Nash algebraic functions. Finally, the condition $N < \infty$ ensures rigidity for the ball.

\subsection*{Declarations}
The author has no relevant financial or non-financial interests to disclose.

\section{K{\"a}hler immersions into Fubini--Study spaces}\label{sec2}

Recall that a \emph{complex space form} is a connected complete K{\"a}hler manifold of constant holomorphic sectional curvature $c$. It is well-known (see \cite{KobayashiNomizu}, Theorem $7.9$) that there are only three models of simply connected complex space forms, up to biholomorphic isometries, distinguished by the sign of $c$. If we set $c = 4b$, these types are:
\begin{itemize}[a]
    \item the complex Euclidean space $\mathbb{C}^N,\ N \leq \infty$, with the flat metric $g_0$ ($b = 0$). Here, $\mathbb{C}^\infty := \ell^2(\mathbb{C})$ is the complex Hilbert space of sequences $z = (z_j)_{j \in \mathbb{N^+}}$ of complex numbers such that $|| z ||^2 := \sum_{k = 1}^\infty |z_k|^2 < \infty$;
    \item the complex hyperbolic space $\mathbb{CH}^N := \mathbb{B}^N_{\frac{1}{-b}},\ N \leq \infty$, with the hyperbolic metric $\frac{1}{-b}g_{hyp}$ ($b < 0$). Its K{\"a}hler form is given by
    \begin{align*}
        \omega_{hyp} = \frac{1}{b}\sqrt{-1} \partial \bar\partial \log(1 + b || z ||^2)
    \end{align*}
    Here, $\mathbb{CH}^\infty$ is the ball of radius $\frac{1}{-b}$ in $\mathbb{C}^\infty$; 
    \item the complex projective space $\mathbb{CP}^N,\ N \leq \infty$, with the Fubini--Study metric $\frac{1}{b}g_{FS}$ ($b > 0$). Its K{\"a}hler form is given in homogeneous coordinates $[Z_0, \dots, Z_N]$ in the affine charts $U_j = \{ Z_j \neq 0 \}$, $j = 0, \dots, N$ by
    \begin{align*}
        \omega_{FS} = \frac{1}{b}\sqrt{-1} \partial \bar\partial \log\left(1 + \sum_{i \neq j} \left| \frac{Z_i}{Z_j} \right|^2\right)
    \end{align*}
    Here, $\mathbb{CP}^\infty$ is the projectivization of $\mathbb{C}^\infty$.
\end{itemize}
We denote these spaces, often called \emph{Fubini--Study spaces}, by $F(N, b)$. In \cite{Calabi53}, Calabi characterized the existence and behavior of K{\"a}hler immersions of a real analytic K{\"a}hler manifold $(M, g)$ into $F(N, b)$ by means of a special function. Starting from a real analytic K{\"a}hler potential $\phi$ for $g$ and a coordinate system $\varphi$ around $p \in M$, the \emph{diastasis function} around $p$ is given by
\begin{align*}
    D(z, w) = \phi(z, \bar{z}) + \phi(w, \bar{w}) - \phi(z, \bar{w}) - \phi(w, \bar{z})
\end{align*}
where $z, w$ are coordinates in $\varphi$ of points around $p$ and $\phi(z, \bar{w})$ is the analytic continuation of $\phi$ to a neighborhood of $(p, p)$ in $M \times \overline{M}$ (here $\overline{M}$ is the conjugate of $M$). If $\varphi(p) = w_0$, the function
\begin{align*}
    D_p(z) = D(z, w_0)
\end{align*}
is still a real analytic K{\"a}hler potential for $g$ around $p$, called the \emph{diastasis potential} at $p$. It is remarkable that $D,\ D_p$ do not depend on $\phi$ nor $\varphi$, and that the isometry condition for a holomorphic map $f \colon M \to N$ of real analytic K{\"a}hler manifolds translates to the preservation of the diastasis potential (see \cite{Calabi53}, Proposition $1-6$):
\begin{equation}\label{Eqdiastasispreserved}
    f^*g^N = g^M \quad\iff\quad\forall\ p \in M:\ D_p^M = D^N_{f(p)} \circ f
\end{equation}
The interested reader can also find a recent and detailed exposition about this subject in \cite{LoiZedda}. In the present work, the following example are of great relevance:

\begin{example}\label{Ex1}
By the properties of $K_\Omega$, the K{\"a}hler manifold $(\Omega, g_\Omega)$ is real analytic. For $p \in \Omega$, the diastasis around $p$ is globally defined on $\Omega$ and given in the usual coordinates by
\begin{equation}\label{EqdiastasisBergman}
    D^\Omega(z, w) = \log\left( \frac{K_\Omega(z, z) K_\Omega(w, w)}{|K_\Omega(z, w)|^2}\right)
\end{equation}
\end{example}
\begin{example}\label{Ex2}
For $p \in F(N, b),\ b \neq 0$: the diastasis around $p$ is given in Bochner coordinates centered at $p$ by
\begin{equation}\label{Eqdiastasishyperbolic}
    D^b(z, w) = \frac{1}{b} \log(1 + b \sum_{i = 1}^N z_i \bar{w}_i)
\end{equation}
Moreover, if $b > 0$: the diastasis around any point can be written in homogeneous coordinates $Z = [Z_0, \dots, Z_N],\ W = [W_0, \dots, W_N]$ as
\begin{equation}\label{Eqdiastasisprojective}
    D^b(Z, W) = \frac{1}{b} \log \left( \frac{(\sum_{i = 0}^N | Z_i |^2 )(\sum_{i = 0}^N | W_i |^2 )}{(\sum_{i = 0}^N Z_i \overline{W}_i )} \right)
\end{equation}
\end{example}
\begin{example}\label{Ex3}
If $(M, g)$ is a real analytic K{\"a}hler manifold, then $\forall\ \lambda > 0$: $(M, \lambda g)$ is again real analytic and if $D^g,\ D^{\lambda g}$ are the diastasis functions for $g, \lambda g$
\begin{equation}\label{Eqdiastasisrescaling}
    D^{\lambda g} = \lambda D^g
\end{equation}
\end{example}

The main tool we need from Calabi's work is the so-called \emph{Calabi's local criterion} (see \cite{Calabi53}, Theorem $8$), which we state in the context of our interest. Suppose $\text{dim}(M) = n$, and fix the lexicographic order for the set of multi-indexes $\mathbb{N}^n$. For $b \neq 0$, $p \in M$ and coordinates $z = z_1, \dots, z_n$ around $p$, consider the expansion on a neighborhood of $p$
\begin{align*}
    \frac{e^{bD_p(z)} - 1}{b} = \sum_{j, k = 0}^\infty\ s_{jk}(z(p))\ (z - z(p))^{m_j} \overline{(z - z(p))^{m_k}}
\end{align*}
where we adopted the multi-index convention $w^{m_l} = w_1^{m_{l1}} \cdot \ldots \cdot w_n^{m_{ln}}$. Then, it holds:
\begin{theorem}[\textbf{Calabi's local criterion}]\label{thmCalabicriterion}
There is a neighborhood of $p$ which admits, when endowed with the induced metric of $M$, a K{\"a}hler immersion into $F(N, b),\ b < 0$, if and only if the infinite-dimensional matrix $(s_{jk}(z(p)))_{j, k}$ is positive semidefinite of rank at most $N$.
\end{theorem}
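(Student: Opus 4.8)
The plan is to recast the immersion problem entirely in terms of the diastasis via the characterization \eqref{Eqdiastasispreserved}, and to exploit the fact that the quantity $\tfrac{e^{bD_p}-1}{b}$ linearizes the target hyperbolic geometry. Concretely, for $F(N,b)$ with $b<0$ the diastasis potential at the center of Bochner coordinates is $D_0^b(z)=\tfrac1b\log(1+b\|z\|^2)$ (the $w=0$ restriction of \eqref{Eqdiastasishyperbolic}), so that
\[
\frac{e^{b D_0^b(z)}-1}{b}=\frac{(1+b\|z\|^2)-1}{b}=\sum_{i=1}^N z_i\overline{z_i}.
\]
This identity is the model against which the coefficient matrix $(s_{jk})$ is to be compared: the target is, in these coordinates, precisely a sum of $N$ squared moduli.

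For the forward implication I would assume a K\"ahler immersion $f$ of a neighborhood of $p$ into $F(N,b)$ and, after composing with an ambient isometry, normalize $f(p)$ to be the origin of Bochner coordinates, writing $f=(f_1,\dots,f_N)$ with each $f_l$ holomorphic. By \eqref{Eqdiastasispreserved} together with \eqref{Eqdiastasishyperbolic}, the potential of $M$ at $p$ satisfies $D_p(z)=\tfrac1b\log\bigl(1+b\sum_l|f_l(z)|^2\bigr)$, whence $\tfrac{e^{bD_p(z)}-1}{b}=\sum_{l=1}^N f_l(z)\overline{f_l(z)}$. Expanding each component $f_l=\sum_j c_{jl}(z-z(p))^{m_j}$ and matching against the defining expansion of $\tfrac{e^{bD_p}-1}{b}$ forces $s_{jk}=\sum_{l=1}^N c_{jl}\overline{c_{kl}}$. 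This exhibits $(s_{jk})$ as a Gram matrix $CC^{*}$ with $C$ having $N$ columns, hence Hermitian positive semidefinite of rank at most $N$.

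For the converse I would reverse the construction. Given $(s_{jk})$ positive semidefinite of rank $r\le N$, the key algebraic step is to factor it as $s_{jk}=\sum_{l=1}^{r}c_{jl}\overline{c_{kl}}$; finite rank means the rows span an $r$-dimensional space, so such a factorization exists. Setting $f_l(z)=\sum_j c_{jl}(z-z(p))^{m_j}$ and $f=(f_1,\dots,f_r,0,\dots,0)$ gives, by construction, $1+b\|f(z)\|^2=e^{bD_p(z)}$; reading this backwards through \eqref{Eqdiastasishyperbolic} yields $D^{F(N,b)}_{f(p)}\circ f=D_p$, so \eqref{Eqdiastasispreserved} certifies $f$ as a K\"ahler immersion. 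The basepoint normalization is automatic: since $D_p$ vanishes at $z(p)$ one has $s_{00}=0$, forcing the constant terms $c_{0l}$ to vanish, so $f(p)=0$; continuity together with $b<0$ keeps $f$ inside the hyperbolic ball near $p$.

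The main obstacle I anticipate is analytic rather than formal: one must guarantee that the series defining the $f_l$ converge and define genuine holomorphic maps on a fixed neighborhood of $p$, and that the finite-rank factorization of the infinite matrix is legitimate. I would handle this using the real-analyticity of $\tfrac{e^{bD_p}-1}{b}$, whose diagonal restriction bounds $\sum_j s_{jj}\,|(z-z(p))^{m_j}|^2$; since $|c_{jl}|^2\le s_{jj}$, a Cauchy--Schwarz estimate transfers this convergence to each $f_l$. The positive-semidefinite, finite-rank hypothesis is exactly what makes both the factorization and these coefficient bounds available, so once the diastasis reformulation is in place the geometric conclusion reduces to a clean linear-algebraic and convergence argument.
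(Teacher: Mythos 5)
Your proof is correct and is essentially the standard argument: the paper itself does not prove Theorem \ref{thmCalabicriterion} (it is quoted from \cite{Calabi53}, Theorem 8), and Calabi's own proof proceeds exactly as yours does --- diastasis preservation reduces the immersion problem to writing $\tfrac{e^{bD_p}-1}{b}$ as a Hermitian sum of at most $N$ squared moduli of holomorphic functions, which is equivalent to the Gram factorization $s_{jk}=\sum_l c_{jl}\overline{c_{kl}}$, with convergence of the candidate components recovered from the coefficient bound $|c_{jl}|^2\le s_{jj}$ and Cauchy--Schwarz. The only points left implicit are routine: uniqueness of the coefficients in the real-analytic expansion (via polarization), and the fact that $f^*g_{F(N,b)}=g_M$ with $g_M$ positive definite automatically makes $f$ an immersion.
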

Since the diastasis depends only on $g$, the statement of the theorem is independent of the choice of coordinates around $p$. If $M$ is connected, it is proved (see \cite{Calabi53}, Theorem $10$) that the minimum possible dimension $N$ for which any point of $M$ has a neighborhood which can be K{\"a}hler immersed into $F(N, b)$ also depends only on $g$. We refer to these (local) K{\"a}hler immersions as \emph{full}, and any two full (local) K{\"a}hler immersions into the same Fubini--Study space differ by a rigid motion of the codomain: this fact is known as \emph{Calabi's rigidity} (see \cite{Calabi53}, Theorem $9$).

\begin{remark}\label{rmk1}
The spaces $F(N, b)$ act as ambient spaces in which real analytic K{\"a}hler manifolds live. We remark here that here is always a K{\"a}hler immersion $(\Omega, g_\Omega) \to F(\infty, 1)$, for if $\{ \phi_\alpha \}_{\alpha \in N^+}$ is a complete orthonormal system for $L^2_h(\Omega)$, the full holomorphic map
\begin{align*}
    f \colon \Omega \to F(\infty, 1) : z \mapsto [\phi_1(z) : \phi_2(z) : \dots ]
\end{align*}
satisfies $D^1_{f(p)} \circ f = D^\Omega_{p}\,\ \forall\ p \in \Omega$, as it can be easily seen due to \eqref{Eqkernelexpansion}, \eqref{EqdiastasisBergman}, \eqref{Eqdiastasisprojective}. It follows by Calabi's rigidity that $(\Omega, g_\Omega)$ cannot be K{\"a}hler immersed, not even after rescaling, into $F(N, b)$ with both $b > 0$ and $N < \infty$. On the other hand, we are well-aware of the existence of a holomorphic immersion of $(\Omega, g_\Omega)$ into $F(N, b)$ for $N \geq n,\ b < 0$, since $\Omega$ is bounded. Hence, Question \ref{q1} emerges as very natural from this viewpoint.
\end{remark}

\section{Transversal extensions of K{\"a}hler immersions}\label{sec3}

\subsection{Fefferman's expansion}

Let $\Omega \subset \mathbb{C}^n$ be a smoothly bounded and strictly pseudoconvex domain. By a profound result of Fefferman (see \cite{Fefferman74}, Corollary at p. $45$) there is an asymptotic expansion
\begin{equation}\label{EqFefferman}
    K_\Omega(z, z) = \frac{\phi_\Omega(z)}{\psi_\Omega^{n + 1}(z)} + \widetilde{\phi_\Omega}(z) \log (\psi_\Omega(z))\ ,\quad z \in \Omega
\end{equation}
where $\phi_\Omega,\ \widetilde{\phi_\Omega} \in C^\infty(\text{cl}(\Omega))$ with $\phi_\Omega |_{\partial \Omega} \neq 0$ and $\psi \in C^\infty(\text{cl}(\Omega))$ is a \emph{defining function} for $\Omega$, that is, it satisfies
\begin{align*}
    \Omega = \{ \psi > 0 \}\ ,\quad \partial \Omega = \{ \psi = 0\}\ ,\quad \nabla \psi |_{\partial \Omega} \neq 0
\end{align*}
The asymptotic behavior of $K_\Omega$ near any point of $\partial \Omega$ depends only on the local CR geometry of $\partial \Omega$ at the point, as it was highlighted in \cite{Fefferman74}. This fact is reflected by the properties of $\phi_\Omega,\ \widetilde{\phi_\Omega}$: we will now briefly recall one relevant instance for our discussion.

Comparing \eqref{Eqkernelball} and \eqref{EqFefferman}, we see that $\widetilde{\phi}_{\mathbb{B}^n}$ is identically zero. What's more, it can be proved that the coefficient $\widetilde{\phi_\Omega}$ of the logarithmic term in \eqref{EqFefferman} vanishes to infinite order at the boundary for $\partial \Omega$ spherical. Ramadanov conjectured \cite{Ramadanov} that the converse is also true, namely the vanishing to infinite order at the boundary of $\widetilde{\phi_\Omega}$ implies that $\partial \Omega$ is spherical. This result was cleared by Boutet de Monvel \cite{Monvel} in dimension two, while the general case is still widely open. It is also worth mentioning, within the scope of this article, that the Cheng conjecture actually follows from the Ramadanov conjecture (see \cite{NemirovskiShafikovConj}, Theorem at p. $781$).  

\subsection{Transversality}

Let $\Omega_1 \subset \mathbb{C}^n, \Omega_2 \subset \mathbb{C}^m$ be bounded domains with $C^1$-regular boundaries, $A_1 \subset \mathbb{C}^n$, $A_2 \subset \mathbb{C}^m$ be neighborhoods of $\text{cl}(\Omega_1), \text{cl}(\Omega_2)$ respectively. A map $\Xi \in C^1(A_1, A_2)$ with $\Xi(\partial \Omega_1) \subset \partial \Omega_2$ is \emph{transverse} to $\partial \Omega_2$ at $x \in \partial \Omega_1$ if
\begin{align*}
    (D_x\Xi)(T_x\Omega_1) + T_{\Xi(x)} \partial \Omega_2 = T_{\Xi(x)} \Omega_2
\end{align*}
$\Xi$ is \emph{transverse along} $\partial \Omega_2$ if it is transverse to $\partial \Omega_2$ at any point of $\partial \Omega_1$. Equivalently, let $\rho_i \in C^1(A_i)$ be a defining function for $\Omega_i,\ i = 1, 2$. Then
\begin{align*}
    \nabla \rho_i\ \text{is normal to } \partial \Omega_i,\ i = 1, 2
\end{align*}
and $\Xi$ is transverse to $\partial \Omega_2$ at $x \in \partial \Omega_1$ if and only if
\begin{align*}
    \langle (D_x\Xi)(\nabla \rho_1(x)), \nabla \rho_2(\Xi(x)) \rangle \neq 0
\end{align*}
In order to implement transversality, notice that the following identity holds:
\begin{equation}\label{Eqtransv}
    \begin{split}
        \langle (D_x\Xi)(\nabla \rho_1(x)), \nabla \rho_2(\Xi(x)) \rangle &= \langle \nabla \rho_1(x), (D_x\Xi)^T \nabla \rho_2(\Xi(x)) \rangle =\\
        &= \langle \nabla \rho_1(x), \nabla(\rho_2 \circ \Xi)(x) \rangle
    \end{split}
\end{equation}

\subsection{Proof of Theorem \ref{thm1}}

The first part of the proof can be carried out in any dimension $n$. Up applying a translation of $\mathbb{C}^n$ and a rigid motion of $\mathbb{B}^N$, we can assume $0 \in \Omega$ and $f(0) = 0$. Since $(\mathbb{B}^N, g_{\mathbb{B}^N}) \equiv F(N, -\frac{1}{N + 1})$, up to homotheties we can pick as defining function for $F(N, -\frac{1}{N + 1})$ 
\begin{align*}
    \psi_{\mathbb{B}^N}(z) = 1 - \frac{1}{N + 1} || z ||^2\ ,\quad z \in F(N, -\frac{1}{N + 1})
\end{align*}
By \eqref{EqdiastasisBergman}, \eqref{Eqdiastasishyperbolic}, \eqref{Eqdiastasisrescaling}, the diastasis preservation \eqref{Eqdiastasispreserved} through $f$ at $0$ can be equivalently written as
\begin{equation}\label{EqdiastasisFefferman}
    K_\Omega(z, z) = \frac{| K_\Omega(z, 0) |^2}{K_\Omega(0, 0)} \frac{1}{\psi_{\mathbb{B}^N}(f(z))^{\frac{N + 1}{\lambda}}}\ ,\quad z \in \Omega
\end{equation}
Since $F(\Omega) = f(\Omega) \subset \mathbb{B}^N,\ F(\partial \Omega) \subset \partial \mathbb{B}^N,\ F(U \smallsetminus \text{cl}(\Omega)) \subseteq V \smallsetminus \text{cl}(\mathbb{B}^N)$, the function $\psi_\Omega := \psi_{\mathbb{B}^N} \circ F \in C^\infty(\text{cl}(\Omega))$ satisfies
\begin{align*}
    \Omega = \{\psi_\Omega > 0\}\ ,\quad \partial \Omega = \{ \psi_\Omega = 0 \}
\end{align*}
Furthermore, from \eqref{Eqtransv}, we deduce that transversality of $F$ yields $\nabla \psi_\Omega |_{\partial \Omega} \neq 0$. Hence, $\psi_\Omega$ is a defining function for $\Omega$ (so $\Omega$ is smoothly bounded), and comparing \eqref{EqFefferman} with \eqref{EqdiastasisFefferman} we get
\begin{align*}
    \frac{| K_\Omega(z, 0) |^2}{K_\Omega(0, 0)} - \phi_\Omega(z) (\psi_\Omega(z))^{\frac{N + 1}{\lambda} - n - 1} - (\psi_\Omega(z))^{\frac{N + 1}{\lambda}} \widetilde{\phi_\Omega}(z) \log (\psi_\Omega(z)) = 0\ ,\quad z \in \Omega
\end{align*}
If $\frac{N + 1}{\lambda} - n - 1 \in \mathbb{N}$, by Lemma $2.2$ of \cite{FuWong} we infer that $\psi_\Omega^{\frac{N + 1}{\lambda}} \widetilde{\phi_\Omega}$ vanishes to infinite order at $\partial \Omega$. Being $\nabla \psi_\Omega |_{\partial \Omega} \neq 0$, we conclude that
\begin{align*}
    \widetilde{\phi_\Omega}\ \text{vanishes to infinite order at}\ \partial \Omega
\end{align*}
In the setting $n = 2$, this fact implies that $\partial \Omega$ is spherical, by Ramadanov's conjecture. By Theorem $A.2$ in \cite{NemirovskiShafikovUnif}, $\Omega$ is universally covered by $\mathbb{B}^n$, which yields $\Omega \cong \mathbb{B}^n$ being $\Omega$ simply connected. 

\begin{remark}\label{rmk2}
Assuming Ramadanov's conjecture is true, the above argument (thus Theorem \ref{thm1}) holds in any dimension.
\end{remark}

\section{Rigidity of K{\"a}hler immersions into the ball }\label{sec4}

\subsection{Hartogs-type and egg domains}

In Theorem \ref{thm2} and Theorem \ref{thm3}, we focus on the following domains built upon sufficiently regular bounded domains. We will adopt the single-argument notation for the Bergman kernel on the diagonal to ease the notation. The complete construction can be found, for each case, in \cite{IshiPark}, \cite{RoosYin} respectively. 

For the second result of this work, let $\Omega \subset \mathbb{C}^n$ be a bounded homogeneous domain. We can associate some holomorphic invariants to $\Omega$, which arise by transferring the Hermitian structure given pointwisely by $g_\Omega$ on $T\Omega$ to the Lie algebra of a maximal connected split solvable subgroup of the holomorphic automorphism group of $\Omega$:
\begin{align*}
    r \in \mathbb{N}^*\ ,\quad \{ p_k, q_k, b_k \}_{k = 1, \dots, r} \subset \mathbb{N}
\end{align*}
The constant $r$ is called \emph{rank} of $\Omega$. If, for any $k = 1, \dots, r$, we set
\begin{align*}
    a_{ki} := \frac{i + \frac{q_k}{2}}{p_k + q_k + b_k + 2}\ ,\quad 1 \leq i \leq 1 + p_k + b_k
\end{align*}
we can consider $C_\Omega := \underset{i, k}{\min}\{ a_{ki}\} > 0$ and the polynomials
\begin{equation}\label{EqpolynomialHartogs}
    F(s) := \prod_{k, i} \left(1 + \frac{s}{a_{ki}}\right)\ ,\quad b(k) = F(sk)
\end{equation}
of degree $d$. For $m \in \mathbb{N^*}$ and $s \in \mathbb{R}$, define the \emph{Hartogs-type domain} based on $\Omega$ as
\begin{align*}
    \Omega_{m, s} := \{ (z, \xi) \in \Omega \times \mathbb{C}^m : || \xi ||^2 < K_\Omega(z, z)^{-s} \}
\end{align*}
where $s > - C_\Omega$ is a real parameter. If in the rising factorial basis $\{ (k + 1)_j \}_{j = 0, \dots, d}$, where $(x)_l = x(x + 1) \cdot \ldots \cdot (x + l - 1)$ is the Pochhammer symbol, we have
\begin{align*}
    b(k) = \sum_{i = 0}^d\ c(s, j) (k + 1)_j
\end{align*}
then the Bergman kernel for $\Omega_{m, s}$ is given on the diagonal of $\Omega_{m, s} \times \Omega_{m, s}$ by (see \cite{IshiPark}, Theorem $4.4$)
\begin{equation}\label{EqBergmanHartogs}
    K_{\Omega_{m, s}}\left(z, \xi\right) = \frac{K_\Omega(z)^{ms + 1}}{\pi^m} \sum_{j = 0}^d \frac{c(s, j)(j + m)!}{(1 - t)^{j + m + 1}} \Big|_{t = K_\Omega(z)^s || \xi ||^2} 
\end{equation}

For the third result, let $\Omega \subset \mathbb{C}^n$ be a bounded irreducible symmetric domain. We can associate some holomorphic invariants to $\Omega$, which arise from the Hermitian Positive Jordan Triple System on $\mathbb{C}^n$ induced by $\Omega$:
\begin{align*}
    r \in \mathbb{N}^*\ ,\quad a, b \in \mathbb{N}
\end{align*}
The constant $r$ is called \emph{rank} of $\Omega$. Define the \emph{genus} of $\Omega$ by $g := 2 + a(r - 1) + b \in \mathbb{N}^*$ and the \emph{generic norm} of $\Omega$ by
\begin{align*}
    N_{\Omega}(z) = \left( \text{vol}(\Omega) K_\Omega(z) \right)^{-\frac{1}{g}}\ ,\quad z \in \Omega
\end{align*}
For $p, q \in \mathbb{N}^*$ and $k \in \mathbb{R},\ k > 0$, define the \emph{egg domains} over $\Omega$ of type $I, II$ respectively
\begin{enumerate}
    \item[I)] $Y(q, \Omega, k) := \{ (z, \xi) \in \Omega \times \mathbb{C}^q : || \xi ||^{2k} < N_\Omega(z) \}$
    \item[II)] $E(p, q, \Omega, k) := \{ (z, \xi_1, \xi_2) \in \Omega \times \mathbb{C}^p \times \mathbb{C}^q : || \xi_1 ||^2 + || \xi_2 ||^{2k} < N_\Omega(z) \}$
\end{enumerate}
\begin{remark}\label{rmk3}
Since any bounded symmetric domain $\Omega$ is homogeneous and $Y(q, \Omega, k) \cong \Omega_{q, \frac{1}{kg}}$, we only need to focus on egg domains of type $II$. We will simply address them as egg domains.
\end{remark}
Consider the polynomial given by
\begin{align*}
    \chi(s) = \prod_{j = 1}^r \left( s + 1 + (j - 1)\frac{a}{2} \right)_{1 + b + (r - j)a}
\end{align*}
If in the rising factorial basis $\{(h + 1)_j\}_{j = 1, \dots, n + 2}$ we have
\begin{align*}
    h(h - 1) \chi(h) = \sum_{j = 1}^{n + 2}\ b_j\ (h + 1)_j
\end{align*}
we can well-define the functions
\begin{align*}
    H_{jm}(\lambda) &:= \sum_{l = 0}^\infty \left( \frac{l + 1}{k} + 2 + m \right)_{j - m} \lambda^l\ ,\\
    H(t_1, \lambda) &:= \sum_{j = 1}^{n + 2} b_j (1 - t_1)^{-j} \sum_{m = 0}^j (-j)_m (2)_m \frac{t_1^m}{\Gamma(m + 1)} H_{jm}(\lambda)
\end{align*}
and consider the function
\begin{align*}
    \Lambda(t_1, t_2) := \frac{1}{\text{vol}(\Omega)} \frac{k}{\chi(0)} (1 - t_1)^{-\frac{1}{k}} H\left( t_1, \frac{t_2}{(1 - t_1)^{\frac{1}{k}}} \right)
\end{align*}
Denoting the partial derivatives of $\Lambda$ by $\Lambda^{(p - 1), (q - 1)}(t_1, t_2) = \frac{\partial^{p + q - 2} \Lambda}{(\partial t_1)^{p - 1}(\partial t_2)^{q - 1}}(t_1, t_2)$, the Bergman kernel for $E(p, q, \Omega, k)$ is given on the diagonal of $E(p, q, \Omega, k) \times E(p, q, \Omega, k)$ by (see \cite{RoosYin}, Corollary $3.8$)
\begin{equation}\label{EqBergmanEgg}
    K_E(z, \xi_1, \xi_2) = \frac{1}{p! q!} \Lambda^{(p - 1), (q - 1)} \left( \frac{|| \xi_1 ||^2}{N_\Omega(z)}, \frac{|| \xi_2 ||^2}{N_\Omega(z)^{\frac{1}{k}}} \right) N_\Omega(z)^{- p - \frac{q}{k} - g}
\end{equation}
\begin{remark}\label{rmk4}
The closed form \eqref{EqBergmanEgg} is considered with respect to a rescaling of the volume form, which gives volume $1$ to unit balls (see \cite{RoosYin}). This choice does not affect the results, as the Bergman kernel will only differ by a multiplicative constant, leaving metric and diastasis unchanged. Hence, we will tacitly omit this detail later on.
\end{remark}
\begin{remark}\label{rmk5}
As anticipated, Hartogs-type and egg domains are generalization of $\mathbb{B}^N$, which is (biholomorphically) retrieved as
\begin{itemize}
    \item Hartogs-type domain over $\mathbb{B}^n$, with $m = N - n$ and $s = \frac{1}{n +1}$;
    \item egg domain over $\mathbb{B}^n$, with $p + q = N - n$ and $k = 1$
\end{itemize}
The constants introduced for the second result are, in the case of $\mathbb{B}^n$, given by (see \cite{IshiPark}, Example $3.1$)
\begin{equation}\label{Eqconstball1}
    r = 1,\ p_1 = q_1 = 0,\ b_1 = n - 1,\ a_{11} = \frac{1}{n + 1} = C_{\mathbb{B}^n},\ a_{1i} = \frac{i}{n + 1}\quad \forall\ i = 2, \dots, n
\end{equation}
while the constants introduced for the third result are, in the case of $\mathbb{B}^n$, given by (see \cite{RoosYin}, Example $4.1$)
\begin{equation}\label{Eqconstball2}
    r = 1,\ a = 2,\ b = n - 1,\ g = n + 1
\end{equation}
\end{remark}
With the newly introduced notation, we can now state Theorem \ref{thm2} and Theorem \ref{thm3} more precisely:
\begin{theorem*}[\ref{thm2}, \textbf{precise form}]\label{thm2precise}
Let $\Omega \subset \mathbb{C}^n$ be a bounded homogeneous domain. If, for some $\lambda > 0$, there is a K{\"a}hler immersion $(\Omega_{m, s}, \lambda g_{\Omega_{m, s}}) \to (\mathbb{B}^N, g_{\mathbb{B}^N})$, $n + m \leq N < \infty$, then $\Omega_{m, s} \cong \mathbb{B}^{n + m}$.
\end{theorem*}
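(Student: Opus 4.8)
The plan is to feed the explicit kernel \eqref{EqBergmanHartogs} into Calabi's criterion (Theorem \ref{thmCalabicriterion}) at a base point $P=(p,0)\in\Omega_{m,s}$ whose fiber coordinate vanishes, and to exploit two principal submatrices of the resulting Calabi matrix, one in the fiber variables $\xi$ and one in the base variables $z$. Writing $(\mathbb{B}^N,g_{\mathbb{B}^N})=F(N,-\tfrac{1}{N+1})$ as in Theorem \ref{thm1} and setting $\beta:=\lambda/(N+1)$, the hypothesized immersion exists iff the coefficient matrix of $1-e^{-\beta D^{\Omega_{m,s}}_P}$, expanded in the monomials of $Z-P=(z-p,\xi)$, is positive semidefinite of rank $\le N$. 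The key computation is that, since $P$ has vanishing fiber coordinate, the polarized kernel $K_{\Omega_{m,s}}(\cdot,P)$ loses all $\xi$-dependence (its inner argument polarizes to $K_\Omega(z,p)^s\langle\xi,0\rangle=0$), so that \eqref{EqdiastasisBergman} factorizes as
\[
e^{-\beta D^{\Omega_{m,s}}_P(z,\xi)}=e^{-\beta(ms+1)D^\Omega_p(z)}\Big(\tfrac{\Theta(0)}{\Theta(t)}\Big)^\beta,\qquad \Theta(t):=\sum_{j=0}^d\tfrac{c(s,j)(j+m)!}{(1-t)^{j+m+1}},
\]
with $t=K_\Omega(z)^s\|\xi\|^2$ and $D^\Omega_p$ the diastasis of the base.

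The fiber submatrix is obtained by setting $z=p$: it is the Calabi matrix, in the variables $\xi$, of $1-(\Theta(0)/\Theta(t))^\beta$, and it is diagonal, because $\|\xi\|^{2l}$ expands only into monomials $\xi^\gamma\overline{\xi^\gamma}$, the $(\gamma,\gamma)$-entry being a positive multiple of the $l$-th Taylor coefficient $\gamma_l$ of $(\Theta(0)/\Theta(t))^\beta$ with $l=|\gamma|$. Finiteness of the rank therefore forces all but finitely many $\gamma_l$ to vanish, i.e. $(\Theta(0)/\Theta(t))^\beta$ must be a polynomial in $t$. Here enters the algebraic heart of the argument: since $\Theta(0)/\Theta(t)$ is a rational function, a non-integer power of it can be a polynomial only if $\beta$ is rational and $\Theta(0)/\Theta(t)$ is itself a polynomial (inspect the orders at the roots of the putative polynomial and at the only candidate pole $t=1$); matching the pole at $t=1$ and comparing degrees then collapses $\Theta$ to the single power $\Theta(t)=c\,(1-t)^{-(d+m+1)}$. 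Equivalently $b(k)$ is a single rising factorial, which by \eqref{EqpolynomialHartogs} forces the multiset $\{a_{ki}/s\}$ to equal $\{1,2,\dots,d\}$.

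The base submatrix is obtained by setting $\xi=0$: the factorization above collapses to $1-e^{-\beta(ms+1)D^\Omega_p(z)}$, whose Calabi matrix is exactly the criterion for $(\Omega,g_\Omega)$ to admit a Kähler immersion into $F(N,b')$ with $b'=-\beta(ms+1)<0$ and $N<\infty$. Since $\Omega$ is a bounded homogeneous domain, the theorems of Di Scala--Loi \cite{LoiScala1} and Di Scala--Ishi--Loi \cite{LoiScala2} force $\Omega\cong\mathbb{B}^n$. Feeding the ball invariants \eqref{Eqconstball1} ($d=n$, $a_{1i}=i/(n+1)$) into the multiset condition $\{a_{ki}/s\}=\{1,\dots,n\}$ pins down $s=1/(n+1)$, whereupon Remark \ref{rmk5} identifies $\Omega_{m,1/(n+1)}$ with $\mathbb{B}^{n+m}$, completing the proof.

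I expect the main obstacle to be the rational-power step of the fiber analysis: one must rule out every way a power $(\Theta(0)/\Theta(t))^\beta$ with $\beta\notin\mathbb{N}$ could terminate as a polynomial, and then faithfully translate the resulting ``single power'' condition into the arithmetic statement about $\{a_{ki}/s\}$. A secondary delicate point is the passage from the local positivity furnished by the base submatrix to a genuine global Kähler immersion of $\Omega$, so that \cite{LoiScala1,LoiScala2} apply verbatim; the homogeneity of $\Omega$ together with Calabi's rigidity should supply this.
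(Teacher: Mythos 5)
Your proposal follows the same skeleton as the paper's own proof (fiber slice at a point with vanishing fiber coordinate, diagonality of that Calabi block, finite rank forcing a polynomial, a divisibility argument collapsing $b(k)$ to a single rising factorial, and the base handled by \cite{LoiScala1}, \cite{LoiScala2}), and your one genuine departure is a reasonable simplification: where the paper first proves $\lambda\in\mathbb{Q}$ via the Nash-algebraic Lemma \ref{LemmaNash} of \cite{LoiMossa} (restricting the diastasis identity to a fiber segment) and only then runs the divisibility argument, you extract rationality of $\beta=\lambda/(N+1)$ directly from the polynomial identity, by comparing vanishing orders at a root of $P=(\Theta(0)/\Theta)^{\beta}$, and then writing $\Theta(t)=Q(t)(1-t)^{-(d+m+1)}$ and using $P^{v}Q^{u}=\mathrm{const}\,(1-t)^{(d+m+1)u}$ to force every irreducible factor of $Q$ to be $1-t$. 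That algebraic step is sound and matches the paper's $T_1/T_2$ endgame.

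However, two steps are genuinely missing. First, nothing in your argument excludes $s=0$, and your two-principal-submatrix strategy structurally cannot exclude it. For $s=0$ one has $\Omega_{m,0}=\Omega\times\mathbb{B}^m$ and $b(k)\equiv 1$, hence $Q(t)=m!\,(1-t)^{d}$ and $\Theta(0)/\Theta(t)=(1-t)^{m+1}$: the fiber block then only demands $(m+1)\beta\in\mathbb{N}$ and the base block only that $\Omega$ be a ball, so both of your tests can pass while the conclusion of the theorem fails, since $\mathbb{B}^n\times\mathbb{B}^m\not\cong\mathbb{B}^{n+m}$. Note that your pole-matching step silently uses $Q(1)=c(s,d)(d+m)!\neq 0$, which is exactly the statement $s\neq 0$, and that the multiset $\{a_{ki}/s\}$ is meaningless at $s=0$. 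The obstruction in the product case sits in the mixed $z$--$\xi$ diagonal entries of the full Calabi matrix (the coefficient of $z_i\xi_j\bar z_i\bar\xi_j$ is negative), which neither of your blocks sees; this is why the paper dispatches $s=0$ as a separate first step, quoting the non-immersibility of products from the proof of Theorem $2$ in \cite{LoiScala2}. Second, your final sentence leaps from ``$\Omega\cong\mathbb{B}^n$ and $s=1/(n+1)$'' to Remark \ref{rmk5}, but that remark identifies $(\mathbb{B}^n)_{m,1/(n+1)}$, not $\Omega_{m,1/(n+1)}$, with the ball: you still need a biholomorphism $\Omega_{m,s}\cong(\mathbb{B}^n)_{m,s}$, and this is not formal, because under a base biholomorphism $\varphi$ the fiber inequality acquires the factor $|\det J(\varphi)(z)|^{2s}$, and absorbing it into the fiber variable requires a holomorphic branch of $\det(J(\varphi))^{s}$ --- the paper's Lemma \ref{Lemmalogarithm}, available since the base is simply connected. (Your appeal to biholomorphism-invariance of the $a_{ki}$ transports the arithmetic, but not the domain.) Finally, the local-to-global issue you flag for the base block is real but best avoided altogether, as the paper does, by composing $f$ with the global K\"ahler immersion $\iota\colon(\Omega,(ms+1)g_\Omega)\to(\Omega_{m,s},g_{\Omega_{m,s}})$, $\iota(z)=(z,0)$.
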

\begin{theorem*}[\ref{thm3}, \textbf{precise form}]\label{thm3precise}
Let $\Omega \subset \mathbb{C}^n$ be a bounded irreducible symmetric domain. If, for some $\lambda > 0$, there is a K{\"a}hler immersion $(E(p, q, \Omega, k), \lambda g_{E}) \to (\mathbb{B}^N, g_{\mathbb{B}^N})$, $n + p + q \leq N < \infty$, then $E(p, q, \Omega, k) \cong \mathbb{B}^{n + p + q}$.
\end{theorem*}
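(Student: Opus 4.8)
The plan is to prove the statement in three stages: first reduce the base $\Omega$ to a ball, then force the exponent $k$ to equal $1$, and finally recognize the resulting domain as a ball. Throughout I write $x=(z,\xi_1,\xi_2)$ for a point of $E:=E(p,q,\Omega,k)$, I use the single-argument kernel $K_E(x)$ of \eqref{EqBergmanEgg} together with its polarization $K_E(x,\bar y)$, and I set $\mu:=\lambda/(N+1)$, recalling that $(\mathbb{B}^N,g_{\mathbb{B}^N})\equiv F\!\left(N,-\tfrac1{N+1}\right)$. For the first stage, consider the complex submanifold $\Sigma=\{(z,0,0):z\in\Omega\}\cong\Omega$. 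Since $N_\Omega(z)^{-g}$ is a constant multiple of $K_\Omega(z)$, formula \eqref{EqBergmanEgg} gives $K_E(z,0,0)=A\,K_\Omega(z)^{\,c}$ on $\Sigma$, with $A>0$ and $c=1+(p+q/k)/g>0$. Pulling $\log K_E$ back to $\Sigma$ and applying $\sqrt{-1}\,\partial\bar\partial$ shows that the metric induced on $\Sigma$ by $g_E$ is exactly $c\,g_\Omega$, so restricting the given immersion to $\Sigma$ yields a Kähler immersion $(\Omega,\lambda c\,g_\Omega)\to(\mathbb{B}^N,g_{\mathbb{B}^N})$ with $N<\infty$. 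By the rigidity theorem of Di Scala and Loi \cite{LoiScala1}, among irreducible bounded symmetric domains only the ball admits such a rescaled immersion into a finite-dimensional ball; hence $\Omega=\mathbb{B}^n$, and by \eqref{Eqconstball2} we may take $g=n+1$ and $N_\Omega(z)=1-\|z\|^2$.

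For the second stage I apply Calabi's criterion (Theorem \ref{thmCalabicriterion}) at the origin with $b=-1/(N+1)$. Expressing the source diastasis through \eqref{EqdiastasisBergman} and \eqref{Eqdiastasisrescaling} gives $e^{b\lambda D^E_0(x)}=K_E(0)^{-\mu}K_E(x,\bar0)^{\mu}\overline{K_E(x,\bar0)}^{\mu}K_E(x,\bar x)^{-\mu}$. Since multiplication by a holomorphic factor, a conjugate-holomorphic factor and a constant preserves the finiteness of the rank of the coefficient matrix, the existence of the immersion forces $K_E(x,\bar x)^{-\mu}$ to have finite Calabi rank, i.e. to be a finite sum $\sum_i u_i(x)\overline{v_i(x)}$ of products of holomorphic and conjugate-holomorphic functions. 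This property passes to any complex submanifold, so I restrict to the two-dimensional slice $z=(w,0,\dots,0),\ \xi_1=0,\ \xi_2=(v,0,\dots,0)$, on which $K_E(x,\bar x)^{-\mu}$ equals $P(t_2)^{-\mu}(1-|w|^2)^{\mu(p+q/k+n+1)}$ with $t_2=|v|^2/(1-|w|^2)^{1/k}$ and $P(t_2)=\frac1{p!q!}\big[\partial_{t_1}^{p-1}\partial_{t_2}^{q-1}\Lambda\big](0,t_2)$. Expanding $P^{-\mu}=\sum_l\beta_l t_2^l$ and polarizing, the Calabi matrix becomes diagonal in the monomials $w^m v^l$, with entries proportional to $\beta_l\binom{\gamma_l}{m}$ where $\gamma_l=\mu(p+q/k+n+1)-l/k$. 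Finiteness of the rank then demands both that $P(t_2)^{-\mu}$ be a polynomial and that $\gamma_l\in\mathbb{N}$ for every surviving $l$.

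The main obstacle is to convert these two arithmetic requirements into the equality $k=1$. Here I expect to use the explicit form of $\Lambda$: the functions $H_{jm}$ appearing in \eqref{EqBergmanEgg} are rational in their argument with poles only at $1$, so $P(t_2)$ is a rational function of $t_2$ whose only pole sits at $t_2=1$; for $\Omega=\mathbb{B}^n$ one computes its numerator explicitly and checks that $P(t_2)^{-\mu}$ can be a polynomial only when that numerator is constant. The fractional powers $(1-t_1)^{1/k}$ built into $\Lambda$ obstruct this for $k\neq1$, while the integrality of the $\gamma_l$, whose consecutive differences are multiples of $1/k$, supplies the complementary arithmetic leverage; together they should force $k=1$. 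This algebraic step, in the spirit of holomorphic Nash-algebraic functions, is the delicate part of the argument and is exactly where the hypothesis $N<\infty$ (finiteness of the rank) is genuinely used.

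Finally, with $\Omega=\mathbb{B}^n$ and $k=1$ one has $N_\Omega(z)=1-\|z\|^2$ and $E=\{\,\|\xi_1\|^2+\|\xi_2\|^2<1-\|z\|^2\,\}=\mathbb{B}^{\,n+p+q}$, so $E\cong\mathbb{B}^{n+p+q}$, as claimed; consistently, the finite-rank analysis at the origin returns the arithmetic constraint $\mu(n+p+q+1)=\lambda(n+p+q+1)/(N+1)\in\mathbb{N}$ on the rescaling factor, in keeping with the algebraic conditions that pervade these results.
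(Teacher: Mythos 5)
Your first stage is sound and matches the paper's opening move (the paper invokes Theorem 2 of \cite{LoiScala2} rather than \cite{LoiScala1}, but for an irreducible symmetric base either reference works). The genuine problem is your second stage: it is a plan, not a proof, and it is exactly the heart of the theorem. You reduce the finite-rank condition on the slice to two arithmetic requirements --- that $P(t_2)^{-\mu}$ be a polynomial and that $\gamma_l=\mu(p+q/k+n+1)-l/k\in\mathbb{N}$ for every surviving $l$ --- but you never show that these force $k=1$; you only say you ``expect'' the explicit form of $\Lambda$ to obstruct $k\neq 1$. Note that the integrality of the $\gamma_l$ cannot do the job by itself: the consecutive differences being multiples of $1/k$ only yields $1/k\in\mathbb{Z}$, which is compatible with, say, $k=\tfrac{1}{2}$, and $E(p,q,\mathbb{B}^n,\tfrac{1}{2})$ is not a ball. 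So the decisive step --- extracting $k=1$ from the explicit numerator of $P$ by a divisibility or zero-locus argument --- is absent, and you have correctly identified it as the delicate point without carrying it out. There is also a smaller gap at the junction between your stages: stage one gives $\Omega\cong\mathbb{B}^n$, not $\Omega=\mathbb{B}^n$, and the egg structure is built from $N_\Omega$, so it must be transported along the biholomorphism; the paper does this explicitly with the map $\Psi(z,\xi_1,\xi_2)=\bigl(\varphi(z),\det(J(\varphi)(z))^{1/g}\xi_1,\det(J(\varphi)(z))^{1/(kg)}\xi_2\bigr)$, whose fractional powers exist thanks to Lemma \ref{Lemmalogarithm}.

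For comparison, the paper closes your stage two without redoing any hard algebra: once the base is reduced to the ball, it observes that an egg domain over a ball is a Hartogs-type domain over a larger ball --- slicing off $\xi_1$, the condition $\|\xi_2\|^{2k}<1-\|z\|^2-\|\xi_1\|^2=N_{\mathbb{B}^{n+p}}(z,\xi_1)$ exhibits $E(p,q,\mathbb{B}^n,k)$ as a domain of the form $(\mathbb{B}^{n+p})_{q,s}$ with $s$ proportional to $1/k$ (cf.\ Remark \ref{rmk3}) --- and then applies Theorem \ref{thm2}, whose proof contains precisely the ingredients your sketch is missing: the Nash-algebraic lemma giving $\lambda\in\mathbb{Q}$, the polynomial-divisibility argument, and the zero-locus comparison forcing the Hartogs exponent to its ball value, which translates into $k=1$. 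If you want to keep your direct approach you must carry out the analogue of that argument for $P(t_2)$; the more economical repair is to graft the paper's reduction to Theorem \ref{thm2} onto your (correct) first stage, after fixing the biholomorphism-transfer step above.
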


\subsection{Holomorphic Nash algebraic functions}

Let $V$ be a complex vector space of dimension $t \geq 1$, $A \subset V$ be open, $f$ holomorphic on $A$. Recall that $f$ is called \emph{Nash algebraic} at $x \in A$ if there is a neighborhood $U$ of $x$ and a polynomial $P \colon V \times \mathbb{C} \to \mathbb{C},\ P \neq 0$, such that 
\begin{align*}
    P(y, f(y)) = 0\quad \forall\ y \in U
\end{align*}
and $f$ is called a \emph{holomorphic Nash algebraic function} if it is Nash algebraic at each point of $A$. The interested reader can refer to \cite{NashAlgebr} for further literature on the topic.

Let $\mathcal{N}^t$ be the set of real analytic functions defined in a neighborhood $U \subset \mathbb{C}^t$ of $0 \in \mathbb{C}^t$ whose real analytic continuation to a neighborhood of $(0, 0) \in \mathbb{C}^t \times \overline{\mathbb{C}^t}$ is a holomorphic Nash algebraic function. Set
\begin{align*}
    \mathcal{F}^t = \{ \eta(f_1, \dots, f_t) : \eta \in \mathcal{N}^t,\ f_j \in \mathcal{O}_0 \text{ and } f_j(0) = 0\ \forall\ j = 1, \dots, t \}
\end{align*}
where $\mathcal{O}_0$ denotes the germ of holomorphic functions at $0 \in \mathbb{C}$, consider $\mathcal{G}^t$ the set of real analytic functions around $0$ whose expansion at any point does not contain non constant purely holomorphic or anti-holomorphic terms, and define
\begin{align*}
    \mathcal{H}^t := \mathcal{F}^t \cap \mathcal{G}^t 
\end{align*}
Then, the following theorem holds (see \cite{LoiMossa}, Theorem $2.2$):
\begin{theorem}\label{thmNash}
Let $\psi_l = \eta_l(f_{l, 1}, \dots, f_{l, t}) \in \mathcal{H}^t,\ l = 0, \dots, s$, satisfy for some $c_1, \dots, c_s \in \mathbb{R}$
\begin{align*}
    \psi_1^{c_1} \cdot \ldots \cdot \psi_s^{c_s} = \psi_0\ ,\quad \psi_0(0) \neq 0
\end{align*}
If $\{ c_1, \dots, c_s, 1 \}$ are linearly independent over $\mathbb{Q}$, then $\psi_l$ is constant $\forall\ l = 1, \dots, s$.
\end{theorem}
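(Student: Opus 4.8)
The plan is to complexify the multiplicative relation and reduce the whole statement to a comparison of orders of vanishing on a projective model, where the arithmetic hypothesis does all the work. First I would pass to the holomorphic continuations: since each $\psi_l \in \mathcal{H}^t$ has the form $\eta_l(f_{l,1}, \dots, f_{l,t})$ with $\eta_l \in \mathcal{N}^t$ and $f_{l,j} \in \mathcal{O}_0$, its continuation factors as $\widetilde\psi_l = E_l \circ \Phi$, where $E_l$ is the holomorphic Nash algebraic continuation of $\eta_l$ (hence algebraic over the field of rational functions of its $2t$ arguments) and $\Phi$ is the single holomorphic germ $(z,w) \mapsto \big(f_{l,j}(z), \overline{f_{l,j}}(w)\big)_{l,j}$ collecting all the inner data into one target $\mathbb{C}^M$, on which each $E_l$ is an algebraic function of its own slots. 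Because the real powers $\psi_l^{c_l}$ are assumed to make sense and $\psi_0(0) \neq 0$, each $\psi_l$ is nonvanishing near $0$, so a single branch of $\log \widetilde\psi_l$ and of $\widetilde\psi_l^{c_l}$ is unambiguous and the continued identity $\prod_{l=1}^s \widetilde\psi_l^{\,c_l} = \widetilde\psi_0$ holds on a neighborhood of $\Phi(0)$; the membership in $\mathcal{G}^t$ enters only to keep these germs genuinely two-variable and the branches coherent.

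Next I would globalize. Let $W$ be an irreducible component through $\Phi(0)$ of the Zariski closure of the image of $\Phi$ in $\mathbb{C}^M$; on $W$ the functions $E_l$ restrict to algebraic functions, and I would pass to a normal projective model $\overline W$, taking the finite normalization in the field extension $\mathbb{C}(W)(E_0, \dots, E_s)$, so that every $E_l$ becomes an honest rational function on $\overline W$. Because $\Phi$ maps a neighborhood of $0$ into $W$ and the $E_l$ are determined by $W$-data alone, the power relation, known on a nonempty open subset, propagates by analytic continuation along the connected smooth locus of $\overline W$ to an identity of (multivalued) analytic functions wherever the branches are defined.

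The arithmetic core comes next. Fixing any prime divisor $D$ of $\overline W$ and a small disc transverse to $D$ at a smooth point avoiding the indeterminacy loci, each $\widetilde\psi_l$ behaves like $(\text{unit}) \cdot \rho^{\,\operatorname{ord}_D E_l}$ in the transverse coordinate $\rho$, with $\operatorname{ord}_D E_l \in \mathbb{Z}$; matching the rate of vanishing of the two sides of $\prod_l \widetilde\psi_l^{\,c_l} = \widetilde\psi_0$ yields $\sum_{l=1}^s c_l \operatorname{ord}_D E_l = \operatorname{ord}_D E_0$. This is an integral linear relation among $\{c_1, \dots, c_s, 1\}$, so the assumed $\mathbb{Q}$-linear independence forces $\operatorname{ord}_D E_l = 0$ for every $l = 1, \dots, s$ and $\operatorname{ord}_D E_0 = 0$, at every prime divisor $D$. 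A rational function on a connected normal projective variety with empty divisor is a nonzero constant; hence each $E_l$ with $l \geq 1$ is constant, and therefore so is $\widetilde\psi_l = E_l \circ \Phi$, i.e. $\psi_l$ is constant, as claimed.

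The delicate step is the globalization, not the arithmetic: the relation is initially available only on the transcendental analytic germ $\Phi(\text{nbhd})$, whereas the order comparison must be run on a \emph{projective} model so that ``empty divisor'' can force constancy. Indeed a local unit such as $1 + z \overline w$ has empty divisor near $0$ yet is nonconstant, and is excluded precisely because its divisor at infinity is nontrivial; this is also why the hypothesis that $\psi_0$ itself lies in $\mathcal{H}^t$ is essential, since it makes the right-hand side algebraic. The main obstacle is therefore to justify that the Zariski closure $W$ is the right algebraic carrier, that all the $E_l$ extend to rational functions on one common normal projective model, and that the power relation genuinely reaches neighborhoods of generic points of \emph{every} prime divisor, including those at infinity. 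Once this is in place the independence of $\{c_1, \dots, c_s, 1\}$ closes the argument immediately, which is exactly why the hypothesis is phrased as $\mathbb{Q}$-linear independence.
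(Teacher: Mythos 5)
Your argument has a genuine gap at the globalization step, and it is not a repairable technicality. The multiplicative relation $\prod_{l=1}^{s}\widetilde\psi_l^{\,c_l}=\widetilde\psi_0$ is known only on the analytic germ $\Phi(U)$, which has dimension at most $2t$, whereas the Zariski closure $W$ of $\Phi(U)$ generically has strictly larger dimension, because the inner functions $f_{l,j}$ are arbitrary (typically transcendental) holomorphic germs. So $\Phi(U)$ is Zariski dense in $W$ but is \emph{not} a neighborhood of $\Phi(0)$ in $W$; it is a thin transcendental leaf. Zariski density propagates \emph{algebraic} identities, but $\prod_l E_l^{\,c_l}-E_0$ is not algebraic (the $c_l$ are irrational), so nothing forces the relation to hold on any open subset of $W$, let alone near generic points of every prime divisor of a projective model $\overline W$: analytic continuation moves the identity along the leaf $\Phi(U)$, never transversally off it. Consequently the order comparison $\sum_l c_l\operatorname{ord}_D E_l=\operatorname{ord}_D E_0$, which is the engine of your proof, is simply unavailable for the divisors you need. (The same phenomenon in miniature: $dw-w\,dz$ pulls back to zero on the graph of $e^z$, which is Zariski dense in $\mathbb{C}^2$, without vanishing on $\mathbb{C}^2$.)

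That this is fatal, and not just unproven, is shown by the fact that your argument never genuinely uses the hypothesis $\psi_l\in\mathcal{G}^t$ (you invoke it only for ``branch coherence''), while the statement is \emph{false} with $\mathcal{F}^t$ in place of $\mathcal{H}^t$. Take $t=1$, $f_1(z)=e^{z}-1$, $\eta(x)=(1+x)\overline{(1+x)}$, whose continuation $(1+x)(1+\bar y)$ is a polynomial, hence Nash algebraic; then $\psi_1=\eta(f_1)=e^{z+\bar z}\in\mathcal{F}^1$ is nonconstant, and for irrational $c>0$ one has $\psi_1^{\,c}=e^{c(z+\bar z)}=\eta(e^{cz}-1)\in\mathcal{F}^1$ with value $1\neq 0$ at the origin; the only hypothesis violated is membership in $\mathcal{G}^1$ (the expansions contain the pure terms $z^j$). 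Now run your proof on this data: $E_1=(1+x_1)(1+x_2)$, $E_0=(1+x_3)(1+x_4)$, and since $e^{z},e^{cz}$ (and likewise $e^{\bar w},e^{c\bar w}$) are algebraically independent for $c\notin\mathbb{Q}$, the Zariski closure is $W=\mathbb{C}^4$. Along the divisor $D=\{x_1=-1\}$ of $\overline W=\mathbb{P}^4$ one has $\operatorname{ord}_D E_1=1$ and $\operatorname{ord}_D E_0=0$, so the identity you would extract, $c\cdot 1=0$, is false --- proving that the relation does not propagate to $\overline W$, and that your scheme would ``prove'' a false theorem. Any correct argument must use the no-pure-terms condition $\mathcal{G}^t$ as its pivot; this is exactly what happens in the source the paper cites for this statement (Loi--Mossa, Theorem 2.2 --- note the paper itself quotes the result without reproving it).
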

For our purposes, we will need an immediate consequence of this result, which we state below for clarity:
\begin{lemma}\label{LemmaNash}
Let $\psi_l = \eta_l(f_{l, 1}, \dots, f_{l, t}) \in \mathcal{H}^t,\ l = 0, 1$, satisfy for some $c \in \mathbb{R}$
\begin{align*}
    \psi_1^{c} = \psi_0\ ,\quad \psi_0(0) \neq 0
\end{align*}
If $\psi_1$ is not constant, then $c \in \mathbb{Q}$.
\end{lemma}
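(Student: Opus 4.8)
The plan is to deduce the Lemma directly from Theorem \ref{thmNash} by specializing to $s = 1$ and arguing by contraposition. First I would record the elementary translation of the hypothesis: for a single real number $c$, the set $\{c, 1\}$ is linearly independent over $\mathbb{Q}$ if and only if $c$ is irrational. Indeed, a nontrivial rational relation $\alpha c + \beta = 0$ with $(\alpha, \beta) \neq (0, 0)$ forces $\alpha \neq 0$ (otherwise $\beta = 0$ as well, contradicting nontriviality), whence $c = -\beta / \alpha \in \mathbb{Q}$; conversely, any rational $c = p/q$ produces the nontrivial relation $q c - p = 0$. Thus the phrase ``$\{c_1, 1\}$ linearly independent over $\mathbb{Q}$'' appearing in Theorem \ref{thmNash} (with $s = 1$, $c_1 = c$) is exactly the condition $c \notin \mathbb{Q}$.

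Next I would set up the contrapositive of the desired implication. Assume $c \notin \mathbb{Q}$; the goal is to show that $\psi_1$ is constant. By hypothesis we have $\psi_0, \psi_1 \in \mathcal{H}^t$ with $\psi_1^{c} = \psi_0$ and $\psi_0(0) \neq 0$. This is precisely the configuration required by Theorem \ref{thmNash} in the case $s = 1$: there is a single factor $\psi_1$ with exponent $c_1 = c$, the product side reduces to $\psi_1^{c}$, the right-hand side $\psi_0$ lies in $\mathcal{H}^t$ and satisfies $\psi_0(0) \neq 0$, and by the observation of the first paragraph the set $\{c_1, 1\} = \{c, 1\}$ is linearly independent over $\mathbb{Q}$. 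Applying Theorem \ref{thmNash} therefore yields that $\psi_1$ is constant.

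Finally, contraposing this conclusion gives exactly the statement of the Lemma: if $\psi_1$ is not constant, then $c$ cannot be irrational, that is, $c \in \mathbb{Q}$. Because the entire argument is a verbatim specialization of the already-established Theorem \ref{thmNash}, I do not anticipate any genuine obstacle. The only points demanding (routine) care are the equivalence between the $\mathbb{Q}$-linear-independence hypothesis and the irrationality of $c$, and the bookkeeping check that the structural hypotheses ($\psi_0, \psi_1 \in \mathcal{H}^t$ and $\psi_0(0) \neq 0$) of the theorem are literally satisfied in the one-factor case.
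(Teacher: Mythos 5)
Your proof is correct and is exactly the argument the paper has in mind: the paper states Lemma \ref{LemmaNash} as an ``immediate consequence'' of Theorem \ref{thmNash}, namely the specialization to $s = 1$ combined with the observation that $\{c, 1\}$ is $\mathbb{Q}$-linearly independent precisely when $c \notin \mathbb{Q}$, argued by contraposition. Nothing further is needed.
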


\subsection{Proof of Theorem \ref{thm2}}

Let $f \colon (\Omega_{m, s}, \lambda g_{\Omega_{m, s}}) \to (\mathbb{B}^N, g_{\mathbb{B}^N}),\ f = (f_1, \dots, f_N)$, be a K{\"a}hler immersion. Up to applying a translation of $\mathbb{C}^n$ and a rigid motion of $\mathbb{B}^N$, we can assume $0 \in \Omega$ (so $0 \in \Omega_{m, s}$) and $f(0) = 0$. In view of Remark \ref{rmk5}, our goal is to show
\begin{align*}
    i)\ \Omega_{m, s} \cong (\mathbb{B}^n)_{m, s}\ ,\quad ii)\ s = \frac{1}{n + 1}
\end{align*}
We immediately notice that $s \neq 0$. Indeed, since $\Omega_{m, 0} = \Omega \times \mathbb{B}^m$ and the Bergman metric respects the product structure 
\begin{align*}
    (\Omega_{m, 0}, \lambda g_{\Omega_{m, 0}})\ \text{does not admit a K{\"a}hler immersion into } \mathbb{B}^N
\end{align*}
(see the proof of Theorem $2$ in \cite{LoiScala2}). Next, the inclusion $\iota \colon \Omega \to \Omega_{m, s}: \iota(z) = (z, 0)$ is a holomorphic immersion and satisfies by \eqref{EqBergmanHartogs}
\begin{align*}
    (\iota^*K_{\Omega_{m, s}})(z) = \frac{m!}{\pi^m}\ b(m)\ K_\Omega(z)^{ms + 1}
\end{align*}
which implies $\iota^*\omega_{\Omega_{m, s}} = (ms + 1) \omega_\Omega$, where $\omega_{\Omega_{m, s}}, \omega_\Omega$ are the K{\"a}hler forms associated to the Bergman metrics. Hence, $\iota \colon (\Omega, (ms + 1) g_\Omega) \to (\Omega_{m, s}, g_{\Omega_{m, s}})$ is a K{\"a}hler immersion. It follows that
\begin{align*}
    f \circ \iota \colon (\Omega, \lambda (ms + 1) g_\Omega) \to (\mathbb{B}^N, g_{\mathbb{B}^N})\ \text{is a K{\"a}hler immersion}
\end{align*}
which implies that $\Omega \cong \mathbb{B}^n$, by Theorem $2$ of \cite{LoiScala2} (applied after rescaling). We will now prove $i)$. To do so, let $\varphi \colon \Omega \to \mathbb{B}^n$ be a biholomorphism. Applying \eqref{EqtransfBergman}, we can write
\begin{equation}\label{HartogsOmegatoball1}
    \Omega_{m, s} = \{ (z, \xi) \in \Omega \times \mathbb{C}^m : |\det(J(\varphi)(z))|^{2s} || \xi ||^2 < K_{\mathbb{B}^n}(\varphi(z))^{- s} \}
\end{equation}
We will make use of the following:
\begin{lemma}\label{Lemmalogarithm}
Let $D \subset \mathbb{C}^n$ be a simply connected domain. If $h$ is holomorphic and nowhere zero on $D$, there is a holomorphic function $\widetilde{h}$ on $D$ such that $e^{\widetilde{h}} = h$. 
\end{lemma}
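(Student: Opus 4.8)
The plan is to realize $\widetilde{h}$ as a holomorphic primitive of the closed holomorphic $1$-form $\omega := \partial h / h$, exactly as one builds a branch of the logarithm. First I would observe that, since $h$ is holomorphic and nowhere vanishing on $D$, the form $\omega = \sum_{j=1}^n \frac{\partial_j h}{h}\, dz_j$ (with $\partial_j h := \partial h / \partial z_j$) is a well-defined holomorphic $(1,0)$-form. A direct computation shows it is closed: $\bar\partial \omega = 0$ because its coefficients are holomorphic, while $\partial \omega = 0$ because $\partial_i(\partial_j h / h) = \partial_i \partial_j h / h - (\partial_i h)(\partial_j h)/h^2$ is symmetric in $i, j$ and is therefore annihilated upon wedging with $dz_i \wedge dz_j$.

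Next I would invoke the hypothesis that $D$ is simply connected: a closed $1$-form on a simply connected domain is exact (Poincar\'e / de Rham), so there is a smooth function $g$ with $dg = \omega$. The essential point is that $g$ inherits holomorphicity, since $\omega$ has no $(0,1)$-part: the $(0,1)$-component of $dg$ is $\bar\partial g$, which equals the $(0,1)$-part of $\omega = 0$, whence $g \in \mathcal{O}(D)$ and $\partial g = \omega$.

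Finally I would show that $h\, e^{-g}$ is a nonzero constant. Computing $\partial(h\, e^{-g}) = e^{-g}(\partial h - h\, \partial g) = e^{-g}(\partial h - h \cdot \partial h / h) = 0$, together with $\bar\partial(h\, e^{-g}) = 0$ (both $h$ and $e^{-g}$ being holomorphic), gives $d(h\, e^{-g}) = 0$; as $D$ is connected, $h\, e^{-g} \equiv c$ for some $c \in \mathbb{C} \smallsetminus \{0\}$. Choosing $a \in \mathbb{C}$ with $e^a = c$ and setting $\widetilde{h} := g + a$ yields $e^{\widetilde{h}} = c\, e^g = h$, as desired.

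The argument is essentially routine; the only delicate point worth stressing is the passage from exactness in the smooth (de Rham) sense to holomorphicity of the primitive, which is precisely where simple connectivity and the $(1,0)$-type of $\omega$ must be combined. Equivalently, one could define $\widetilde{h}(p) = a + \int_\gamma \omega$ over a path $\gamma$ from a fixed base point to $p$, in which case the main obstacle becomes verifying that simple connectivity forces all period integrals over loops to vanish, so that the definition is path-independent.
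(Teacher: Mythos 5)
Your proof is correct and follows essentially the same route as the paper's: both integrate the closed holomorphic $(1,0)$-form $\partial h/h$ using simple connectivity (Poincar\'e lemma), observe that the resulting primitive is holomorphic because the form has no $(0,1)$-part, show that $h\,e^{-g}$ is a nonzero constant, and absorb that constant by adding a logarithm of it to the primitive. Your write-up is, if anything, slightly more explicit about the closedness computation and the path-integral alternative, but the argument is the same.
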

\begin{proof}
Consider the holomorphic $1$-form on $D$
\begin{align*}
    \eta := \frac{1}{h} \partial h = \frac{1}{h} dh
\end{align*}
which is closed, as $d \eta = -\frac{1}{h^2} dh \wedge dh = 0$. Being $D$ simply connected, by Poincaré's Lemma there is $h' \in C^\infty(D, \mathbb{C})$ such that $d h' = \eta$. In particular, $h'$ is holomorphic because $\eta$ has bidegree $(1, 0)$. Then
\begin{align*}
    d (e^{-h'} h) = (- e^{-h'} h) d h' + e^{h'} d h = 0
\end{align*}
so there is $c \in \mathbb{C}^*$ such that $e^{h'} = c h$. Choosing a logarithm for $\frac{1}{c}$ yields the claim with $\widetilde{h} = \log(\frac{1}{c}) h'$.
\end{proof}
Being $\varphi$ a biholomorphism, $\det(J(\varphi))$ is holomorphic and nowhere zero on $\Omega$, so by Lemma \ref{Lemmalogarithm} there is a holomorphic function $\Phi$ on $\Omega$ such that $e^\Phi = \det(J(\varphi))$. Then, for $s \in \mathbb{R}$
\begin{align*}
    \det(J(\varphi))^s = e^{s \Phi}\ \text{is holomorphic on } \Omega
\end{align*}
and since \eqref{HartogsOmegatoball1} becomes
\begin{align*}
    \Omega_{m, s} = \{ (z, \xi) \in \Omega \times \mathbb{C}^m : || \det(J(\varphi)(z))^s \xi ||^2 < K_{\mathbb{B}^n}(\varphi(z))^{- s} \}
\end{align*}
we have a well-defined holomorphic map
\begin{align*}
    \Psi \colon \Omega_{m, s} \to (\mathbb{B}^n)_{m, s} : (z, \xi) \mapsto \Big( \varphi(z), \det(J(\varphi)(z))^s \xi \Big)
\end{align*}
In particular, $\Psi$ is actually a biholomorphism whose inverse is given by
\begin{align*}
    \Theta \colon (\mathbb{B}^n)_{m, s} \to \Omega_{m, s} : (Z, W) \mapsto \Big(\varphi^{-1}(Z), \det(J(\varphi)(\varphi^{-1}(Z)))^{-s} W\Big)
\end{align*}
meaning that $i)$ holds. 

It remains to prove $ii)$, which will be the most technical part of the proof. We will find a constriction on $\lambda$, and then use it together with Theorem \ref{thmCalabicriterion} to ensure the desired value for $s$. 

Since we reduced to $(\mathbb{B}^n)_{m, s}$, from \eqref{Eqconstball1} we infer the following closed form for the polynomial $b$ in \eqref{EqpolynomialHartogs}:
\begin{equation}\label{EqpolynomialHartogsball}
    b(k) = \prod_{i = 1}^n \left( 1 + \frac{(n + 1)sk}{i} \right) = \frac{1}{n!} \prod_{i = 1}^n (i + (n + 1)sk)
\end{equation}
In particular, $\text{deg}(b(k)) = n$ because $s \neq 0$, and since $(k + 1)_n$ is the only $n^{th}$-degree polynomial in the rising factorial basis, we conclude
\begin{equation}\label{Eqtopconstant}
    c(s, n) \neq 0
\end{equation}
Now, let $D_0$ be the diastasis potential of $g_{\mathbb{B}^n_{m, s}}$ at $(0, 0)$. By \eqref{EqBergmanHartogs}, \eqref{EqdiastasisBergman}
\begin{equation}\label{EqdiastasisHartogs}
    \begin{split}
        D_0(z, \xi) &= (ms + 1) \log \left( K_{\mathbb{B}^n}(z) K_{\mathbb{B}^n}(0) K_{\mathbb{B}^n}(z, 0)^{-1} K_{\mathbb{B}^n}(0, z)^{-1} \right) +\\
        &\hspace{5mm} + \log \left( \sum_{j = 0}^n c(s, j) (j + m)! \left(1 - K_{\mathbb{B}^n}(z)^s || \xi ||^2 \right)^{- j - m - 1} \right) +\\
        &\hspace{5mm} - \log \left( \sum_{j = 0}^n c(s, j) (j + m)! \right)
    \end{split}
\end{equation}
Set $S := \sum_{j = 0}^n c(s, j) (j + m)! = \frac{\pi^m K_{{\mathbb{B}^n}_{m, s}}(0, 0)}{K_{\mathbb{B}^n}(0, 0)^{ms + 1}} > 0$ and $\forall\ \gamma = 0, \dots, n : c'(s, \gamma) := \frac{c(s, \gamma)}{S}$. Then $\sum_{j = 0}^n c'(s, j) (j + m)! = 1$ and \eqref{EqdiastasisHartogs} can be written as 
\begin{equation}\label{EqdiastasisHartogsnormalized}
    \begin{split}
        D_0(z, \xi) &= (ms + 1) \log \left( K_{\mathbb{B}^n}(z) K_{\mathbb{B}^n}(0) K_{\mathbb{B}^n}(z, 0)^{-1} K_{\mathbb{B}^n}(0, z)^{-1} \right) +\\
        &\hspace{5mm} + \log \left( \sum_{j = 0}^n c'(s, j) (j + m)! \left(1 - K_{\mathbb{B}^n}(z)^s || \xi ||^2 \right)^{- j - m - 1} \right)
    \end{split}
\end{equation}
Observe that $(\mathbb{B}^N, g_{\mathbb{B}^N}) \equiv F(N, -\frac{1}{N + 1})$. By \eqref{EqdiastasisHartogsnormalized}, \eqref{Eqdiastasishyperbolic} and \eqref{Eqdiastasisrescaling}, the preservation of the diastasis \eqref{Eqdiastasispreserved} through $f$ at $(0, 0)$ reads
\begin{equation}\label{Eqdiastasis1}
    \begin{split}
        (K_{\mathbb{B}^n}&(z) K_{\mathbb{B}^n}(0) K_{\mathbb{B}^n}(z, 0)^{-1} K_{\mathbb{B}^n}(0, z)^{-1})^{-\frac{\lambda(ms + 1)}{N + 1}} \times\\
        &\hspace{5mm} \times \left( \sum_{j = 0}^n c'(s, j) (j + m)! \left(1 - K_{\mathbb{B}^n}(z)^s || \xi ||^2 \right)^{- j - m - 1} \right)^{-\frac{\lambda}{N + 1}} =\\
        &= 1 - \frac{1}{N + 1} \sum_{j = 1}^N | f_j(z, \xi) |^2
    \end{split}
\end{equation}
Set $z_1 := (1, 0, \dots, 0) \in \mathbb{C}^m$ and $C := (K_{\mathbb{B}^n}(0))^s > 0$. The complex segment $\underline{s} = \{ (0, tz_1) \in \mathbb{C}^{n + m} : t \in B_{\frac{1}{C}}(0) \subset \mathbb{C} \}$, is contained in $(\mathbb{B}^n)_{m, s}$, so we can restrict to $\underline{s}$ in \eqref{Eqdiastasis1} to get
\begin{equation}\label{EqHartogsNashalg}
    1 - \frac{1}{N + 1} \sum_{j = 1}^N | f_j(0, t z_1) |^2 = \left( \sum_{j = 0}^n c'(s, j) (j + m)! \left(1 - C | t |^2 \right)^{- j - m - 1} \right)^{-\frac{\lambda}{N + 1}}
\end{equation}
Consider $\eta \in \mathcal{G}^N$ given by
\begin{align*}
    \eta(w) = 1 - \frac{1}{N + 1} || w ||^2 = 1 - \frac{1}{N + 1} \sum_{l = 1}^N w_l \overline{w}_l
\end{align*}
whose analytic continuation around $(0, 0)$, given by
\begin{align*}
    \widetilde{\eta}(w, \overline{v}) = 1 - \frac{1}{N + 1} \langle w, v \rangle = 1 - \frac{1}{N + 1} \sum_{l = 1}^N w_l \overline{v}_l
\end{align*}
is holomorphic on $\mathbb{C}^N \times \overline{\mathbb{C}^N}$. The polynomial $P$ on $(\mathbb{C}^N \times \overline{\mathbb{C}^N}) \times \mathbb{C}$ given by 
\begin{align*}
    P((X, \overline{Y}), \alpha) = \frac{1}{N + 1} \sum_{l = 1}^N X_l \overline{Y}_l + \alpha - 1
\end{align*}
satisfies $P((w, \overline{v}), \widetilde{\eta}(w, \overline{v})) = 0$, so $\eta \in \mathcal{N}^N$. Moreover, since for $t \in B_{\frac{1}{C}}(0) \subset \mathbb{C}$: $f_1(0, t z_1), \dots, f_N(0, t z_1) \in \mathcal{O}_0$ and $f_1(0, 0) = \ldots = f_N(0, 0) = 0$, we conclude
\begin{align*}
    \psi_0 := 1 - \frac{1}{N + 1} \sum_{j = 1}^N | f_j(0, t z_1) |^2 = \eta(f_1(0, t z_1), \dots, f_N(0, t z_1)) \in \mathcal{H}^t
\end{align*}
Also notice that $\psi_0(0) = 1$, by construction. Similarly, the function defined in the neighborhood $B_{\frac{1}{C}}(0) \subset \mathbb{C}^N$ of $0$ by
\begin{align*}
    \delta(w) &= \sum_{j = 0}^n c'(s, j) (j + m)! \left(1 - C || w ||^2 \right)^{- j - m - 1} =\\
    &= \sum_{j = 0}^n c'(s, j) (j + m)! \sum_{l = 0}^\infty \binom{m + j + l}{l} C^l || w ||^{2l} =\\
    &= \sum_{l = 0}^\infty \left( C^l \sum_{j = 0}^n c'(s, j) (j + m)! \binom{m + j + l}{l} \right) \sum_{| \gamma | = l} \frac{l!}{\gamma !} w^\gamma \overline{w}^\gamma
\end{align*}
lies in $\mathcal{G}^N$, and its analytic continuation, given in a neighborhood $U \times \overline{U}$ of $(0, 0)$ by
\begin{align*}
    \widetilde{\delta}(w, \overline{v}) &= \sum_{j = 0}^n c'(s, j) (j + m)! \left(1 - C \langle w, v \rangle \right)^{- j - m - 1}
\end{align*}
is holomorphic. Given the polynomial $Q$ on $(\mathbb{C}^N \times \overline{\mathbb{C}^N}) \times \mathbb{C}$ 
\begin{align*}
    Q((X, \overline{Y}), \alpha) = \alpha (1 - C \langle X, Y \rangle)^{m + n + 1} - \sum_{j = 0}^n c'(s, j) (j + m)! (1 - \langle X, Y \rangle)^{n - j}
\end{align*}
then $Q((w, \overline{v}), \widetilde{\delta}(w, \overline{v})) = 0$ and $\delta \in \mathcal{N}^N$. We conclude
\begin{align*}
    \psi_1 := \sum_{j = 0}^n c'(s, j) (j + m)! \left(1 - C | t |^2 \right)^{- j - m - 1} = \delta(\text{id}_\mathbb{C}, 0, \dots, 0) \in \mathcal{H}^N
\end{align*}
and $\psi_1$ is not constant. With the introduced notation, \eqref{EqHartogsNashalg} reads $\psi_0 = \psi_1^{-\frac{\lambda}{N + 1}}$, so Lemma \ref{LemmaNash} applies, yielding $\lambda \in \mathbb{Q}$. This is the desired constriction. 

Consider the complex submanifold $(\mathbb{B}^n)_{m, s} \cap \{ z \in \mathbb{C}^{n + m} : z_1 = \dots = z_n = 0 \} = \{ 0 \} \times B_{\frac{1}{C}}(0)$ of $(\mathbb{B}^n)_{m, s}$, endowed with the real analytic K{\"a}hler metric $i^*g_{\mathbb{B}^n_{m, s}}$ induced by the inclusion $i \colon \{ 0 \} \times B_{\frac{1}{C}}(0) \to (\mathbb{B}^n)_{m, s}$. By \eqref{Eqdiastasispreserved}, the diastasis potential at $0$ for $i^*g_{\mathbb{B}^n_{m, s}}$ is given by $D_0 \circ i$, and
\begin{align*}
    f \circ i \colon (\{ 0 \} \times B_{\frac{1}{C}}(0), \lambda (i^*g_{\mathbb{B}^n_{m, s}})) \to (\mathbb{B}^N, g_{\mathbb{B}^N})
\end{align*}
is a K{\"a}hler immersion. Hence, in view of Theorem \ref{thmCalabicriterion}, we aim to find the expansion at $(0, 0)$ of the function
\begin{equation}\label{Eqdiastasissubmanif1}
    1 - e^{- \frac{D^\lambda_0(0, \xi)}{N + 1}} = 1 - \left( \sum_{j = 0}^n c'(s, j) (j + m)! \left(1 - C || \xi ||^2 \right)^{- j - m - 1} \right)^{-\frac{\lambda}{N + 1}} 
\end{equation}
Rewrite \eqref{Eqdiastasissubmanif1} as
\begin{equation}\label{Eqdiastasissubmanif2}
    \begin{split}
        1 - &e^{- \frac{D^\lambda_0(0, \xi)}{N + 1}} = 1 - (1 - C || \xi ||^2)^{\frac{\lambda(n + m + 1)}{N + 1}} \times\\
        &\hspace{5mm} \times \left( 1 - \left( 1 - \sum_{j = 0}^n c'(s, j) (j + m)! \left(1 - C || \xi ||^2 \right)^{n - j} \right) \right)^{-\frac{\lambda}{N + 1}}
    \end{split}
\end{equation}
Since $\left( 1 - \sum_{j = 0}^n c'(s, j) (j + m)! \left(1 - C || \xi ||^2 \right)^{n - j} \right) |_{\xi = 0} = 0$, by continuity there is a neighborhood $U \subset B_{\frac{1}{C}}(0)$ of $0 \in \mathbb{C}^m$ such that $\forall\ \xi \in U$
\begin{align*}
    \left| 1 - \sum_{j = 0}^n c'(s, j) (j + m)! \left(1 - C || \xi ||^2 \right)^{n - j} \right| < 1
\end{align*}
We can then apply the general binomial expansion to get
\begin{equation}\label{Eqdiastasissubmanif3a}
    \begin{split}
        &\left( 1 - \left( 1 - \sum_{j = 0}^n c'(s, j) (j + m)! \left(1 - C || \xi ||^2 \right)^{n - j} \right) \right)^{-\frac{\lambda}{N + 1}} =\\
        &\hspace{10mm} = \sum_{l = 0}^\infty \binom{\frac{\lambda}{N + 1} + l - 1}{l} \left( 1 - \sum_{j = 0}^n c'(s, j) (j + m)! \left(1 - C || \xi ||^2 \right)^{n - j} \right)^l
    \end{split}
\end{equation}
and the multinomial expansion to get
\begin{equation}\label{Eqdiastasissubmanif3b}
    \begin{split}
        &\left( 1 - \sum_{j = 0}^n c'(s, j) (j + m)! \left(1 - C || \xi ||^2 \right)^{n - j} \right)^l =\\
        &\hspace{10mm} = \sum_{| a | = l} \frac{l!}{a!} (-1)^{l - a_{n + 2}} \prod_{j = 1}^{n + 1} (c'(s, j - 1))^{a_j} (1 - C || \xi ||^2)^{(n - j)a_j} =\\
        &\hspace{10mm} = \sum_{| a | = l} \frac{l!}{a!} (-1)^{l - a_{n + 2}} (1 - C || \xi ||^2)^{\sum_{k = 1}^{n + 1}(n - k)a_k} \prod_{j = 1}^{n + 1} (c'(s, j - 1))^{a_j}
    \end{split}
\end{equation}
For any multi-index $a = (a_1, \dots, a_{n + 2})$, set $c(a) := \frac{(-1)^{| a | + 1 - a_{n + 2}}}{a!}\prod_{j = 1}^{n + 1} (c'(s, j - 1))^{a_j}$. Insert \eqref{Eqdiastasissubmanif3a} and \eqref{Eqdiastasissubmanif3b} into the right-hand side of \eqref{Eqdiastasissubmanif2} to get
\begin{equation}\label{Eqdiastasissubmanif4}
    \begin{split}
        1 - e^{- \frac{D^\lambda_0(0, \xi)}{N + 1}} &= \sum_{l = 1}^\infty \binom{\frac{\lambda}{N + 1} + l - 1}{l} \sum_{| a | = l} l!\ c(a) \times\\
        &\hspace{5mm} \times (1 - C || \xi ||^2)^{\frac{\lambda(n + m + 1)}{N + 1} + \sum_{k = 1}^{n + 1}(n - k)a_k} =\\
        &= \sum_{l = 1}^\infty \binom{\frac{\lambda}{N + 1} + l - 1}{l} \sum_{| a | = l} l!\ c(a) \times\\
        &\hspace{5mm} \times \sum_{v = 0}^\infty \binom{\frac{\lambda(n + m + 1)}{N + 1} + \sum_{k = 1}^{n + 1}(n - k)a_k}{v} (-1)^v C^v || \xi ||^{2v}
    \end{split}
\end{equation}
By the absolute convergence of the generalized binomial series, we can interchange the order of the two series in \eqref{Eqdiastasissubmanif4}. Thus, set $\alpha(v) := \sum_{l = 1}^\infty \binom{\frac{\lambda}{N + 1} + l - 1}{l} \sum_{| a | = l} l!\ c(a) \binom{\frac{\lambda(n + m + 1)}{N + 1} + \sum_{k = 1}^{n + 1}(n - k)a_k}{v} (-1)^v C^v$ to obtain
\begin{equation}\label{Eqdiastasissubmanif5}
    \begin{split}
        1 - e^{- \frac{D^\lambda_0(0, \xi)}{N + 1}} = \sum_{v = 0}^\infty \alpha(v)\ || \xi ||^{2v} = \sum_{v = 0}^\infty \alpha(v) \sum_{| \gamma | = v} \frac{v!}{\gamma !} \xi^\gamma \overline{\xi^\gamma}
    \end{split}
\end{equation}
If we now switch to the multi-index notation introduced in \ref{sec2}, \eqref{Eqdiastasissubmanif5} becomes 
\begin{equation}
    1 - e^{- \frac{D^\lambda_0(0, \xi)}{N + 1}} = \sum_{r = 0}^\infty \beta(r)\ \xi^{m_r} \overline{\xi^{m_r}}
\end{equation}
where $\beta(0) = 0$ and $\forall\ r \geq 1$
\begin{align*}
   \beta(r) = \sum_{l = 1}^\infty \binom{\frac{\lambda}{N + 1} + l - 1}{l} \sum_{| a | = l} l!\ c(a) \binom{\frac{\lambda(n + m + 1)}{N + 1} + \sum_{k = 1}^{n + 1}(n - k)a_k}{| m_r |} (-C)^{| m_r |} \frac{| m_r |!}{m_r !}
\end{align*}
Hence, the infinite-dimensional matrix $(s_{j k}(0))_{j, k}$ of the expansion of the function $(N + 1)\big(1 - e^{- \frac{D^\lambda_0(0, \xi)}{N + 1}}\big)$ is given by
\begin{align*}
    (s_{j k}(0))_{j, k} = \text{diag}(0, (N + 1) \beta(1), (N + 1) \beta(2), \dots)
\end{align*}
and by Theorem \ref{thmCalabicriterion}, it is positive semidefinite of rank at most $N$. In particular $\exists\ L \in \mathbb{N}^*$ such that $s_{rr}(0) = 0\ \forall\ r \geq L$, or in other words, $(N + 1)\big(1 - e^{- \frac{D^\lambda_0(0, \xi)}{N + 1}}\big)$ is a polynomial. Applying a change of variable, by \eqref{Eqdiastasissubmanif2} we infer that
\begin{equation}\label{EqHartogsfinal1}
    (1 - C X)^{\frac{\lambda(n + m + 1)}{N + 1}} \left( \sum_{j = 0}^n c'(s, j) (j + m)! \left(1 - C X \right)^{n - j} \right)^{-\frac{\lambda}{N + 1}}
\end{equation}
is also a polynomial. Now, being $\lambda \in \mathbb{Q}$ and $\lambda > 0$, there are $\delta, \varepsilon \in \mathbb{N}^*$ for which $\frac{\lambda}{N + 1} = \frac{\delta}{\varepsilon}$. If we set
\begin{align*}
    T_1(X) &:= (1 - C X)^{\delta(n + m + 1)}\ ,\\
    T_2(X) &:= \left( \sum_{j = 0}^n c'(s, j) (j + m)! \left(1 - C X \right)^{n - j} \right)^{\delta}
\end{align*}
then the rational function $\frac{T_1}{T_2}$ is actually a polynomial, by \eqref{EqHartogsfinal1}. However, $T_2$ does not divide $T_1$, otherwise, $T_1$ and $T_2$ would share a factor and the only irreducible factor of $T_1$ is $1 - C X$, which does not divide $T_2$ because \eqref{Eqtopconstant} gives
\begin{align*}
    T_2\left( \frac{1}{C} \right) = \left( c'(s, n) (n + m)! \right)^\delta \neq 0
\end{align*}
It follows that $T_2$ must be constant: $T_2 \equiv T_2(0) = (\sum_{j = 0}^n c'(s, j) (j + m)!)^\delta = 1$. Thus
\begin{align*}
    \sum_{j = 0}^{n - 1} c'(s, j) (j + m)! (1 - C X)^{n - j} + c'(s, n) (n + m)! - 1 \equiv 0
\end{align*}
which, by linear independence of $\{ 1, 1 - C X, \dots, (1 - C X)^n \}$, results into $c(s, j) = 0$ $\forall\ j = 0, \dots, n - 1$. Then, by definition
\begin{equation}\label{EqHartogsfinal2}
    b(k) = c(s, n) (k + 1)_n
\end{equation}
and due to \eqref{Eqtopconstant} \eqref{EqHartogsfinal2}, if $Z(b(k)), Z((k + 1)_n)$ denote the zero locus of $b(k), (k + 1)_n$
\begin{equation*}\label{EqHartogsfinal3}
    \left\{ - \frac{l}{(n + 1)s} : l = 1, \dots, n \right\} = Z(b(k)) = Z((k + 1)_n) = \{ - l : l = 1, \dots, n \} 
\end{equation*}
Since $Z((k + 1)_n) \subset \mathbb{Z}^-$, $s$ is forced to be positive, so $-\frac{1}{(n + 1)s} = \max Z(b(k)) = \max Z((k + 1)_n) = -1$, and finally $s = \frac{1}{n + 1}$, as desired.

\subsection{Proof of Theorem \ref{thm3}}

Let $f \colon (E(p, q, \Omega, k), \lambda g_{E}) \to (\mathbb{B}^N, g_{\mathbb{B}^N}),\ f = (f_1, \dots, f_N)$, be a K{\"a}hler immersion. As in the proof of Theorem \ref{thm2}, assume $0 \in \Omega$ (so $0 \in E(p, q, \Omega, k)$) and $f(0) = 0$. The inclusion $\iota \colon \Omega \to E(p, q, \Omega, k): \iota(z) = (z, 0, 0)$ is a holomorphic immersion and by \eqref{EqBergmanEgg}
\begin{align*}
    (\iota^*K_E)(z) = \frac{1}{p! q!} \Lambda^{(p - 1), (q - 1)} \left( 0, 0 \right) (\text{vol}(\Omega) K_\Omega(z))^{\frac{p}{g} + \frac{q}{kg} + 1}
\end{align*}
which implies $\iota^*\omega_E = (\frac{p}{g} + \frac{q}{kg} + 1) \omega_\Omega$, where $\omega_E, \omega_\Omega$ are the K{\"a}hler forms associated to the Bergman metrics. Hence, $\iota \colon (\Omega, \big(\frac{p}{g} + \frac{q}{kg} + 1\big) g_\Omega) \to (E(p, q, \Omega, k), g_E)$ is a K{\"a}hler immersion. It follows that
\begin{align*}
    f \circ \iota \colon \left(\Omega, \lambda \Big(\frac{p}{g} + \frac{q}{kg} + 1\Big) g_\Omega\right) \to (\mathbb{B}^N, g_{\mathbb{B}^N})\ \text{is a K{\"a}hler immersion}
\end{align*}
which implies that $\Omega \cong \mathbb{B}^n$, by Theorem $2$ of \cite{LoiScala2} (applied after rescaling). Now let $\varphi \colon \Omega \to \mathbb{B}^n$ be a biholomorphism. Applying \eqref{EqtransfBergman}, we see that $E(p, q, \Omega, k)$ is biholomorphic to
\begin{equation*}\label{EggOmegatoball1}
    \{ (z, \xi_1, \xi_2) \in \Omega \times \mathbb{C}^p \times \mathbb{C}^q : |\det(J(\varphi)(z))|^{\frac{2}{g}} (|| \xi_1 ||^2 + || \xi_2 ||^{2k}) < N_{\mathbb{B}^n}(\varphi(z)) \}
\end{equation*}
Denote this domain by $E'$. As in the proof of Theorem \ref{thm2}, by Lemma \ref{Lemmalogarithm} there is a holomorphic exponential form for $\det(J(\varphi))$ and we can define a biholomorphism
\begin{align*}
    \Psi \colon E' \to E(p, q, \mathbb{B}^n, k) : (z, \xi_1, \xi_2) \mapsto \Big( \varphi(z), \det(J(\varphi)(z))^\frac{1}{g} \xi_1, \det(J(\varphi)(z))^\frac{1}{kg} \xi_2 \Big)
\end{align*}
so we reduced to the case of a K{\"a}hler immersion $(E(p, q, \mathbb{B}^n, k), \lambda g_E) \to (\mathbb{B}^N, g_{\mathbb{B}^N})$. However, $(E(p, q, \mathbb{B}^n, k), \lambda g_E) \cong (\mathbb{B}^{n + p})_{q, \frac{1}{k}}$, and Theorem \ref{thm2} applies.

\bibliography{Bibliography}

\end{document}